\newtheorem{theorem}{Theorem}[section]
\newtheorem{example}[theorem]{Example}
\newtheorem{definition}[theorem]{Definition}
\newtheorem{lemma}[theorem]{Lemma}
\newtheorem{notation}[theorem]{Notation}
\newcommand{\Set}{\mathsf{Set}}
\newcommand{\Finset}{\mathsf{Finset}}
\newcommand{\dDWD}{\mathsf{dDWD}}
\newcommand{\DWD}{\mathsf{DWD}}
\newcommand{\Xin}{{X_\textrm{in}}}
\newcommand{\Xout}{{X_\textrm{out}}}
\newcommand{\Yin}{{Y_\textrm{in}}}
\newcommand{\Yout}{{Y_\textrm{out}}}
\newcommand{\Zin}{{Z_\textrm{in}}}
\newcommand{\Zout}{{Z_\textrm{out}}}
\newcommand{\binomx}{\binom{\Xin}{\Xout}}
\newcommand{\binomy}{\binom{\Yin}{\Yout}}
\newcommand{\binomz}{\binom{\Zin}{\Zout}}
\newcommand{\W}{W}
\newcommand{\Win}{{W_\textrm{in}}}
\newcommand{\Wout}{{W_\textrm{out}}}
\newcommand{\R}{\mathbb R}
\newcommand{\Poset}{\mathsf{Poset}}
\newcommand{\xin}{{x_\textrm{in}}}
\newcommand{\xout}{{x_\textrm{out}}}
\newcommand{\yin}{{y_\textrm{in}}}
\newcommand{\oxin}{\overline{x}_\textrm{in}}
\newcommand{\oxout}{\overline{x}_\textrm{out}}
\newcommand{\ophi}{\overline{\phi}}
\newcommand{\ophiout}{\overline{\phi}_\textrm{out}}
\newcommand{\ophiin}{\overline{\phi}_\textrm{in}}
\newcommand{\ovarphi}{\overline{\varphi}}
\newcommand{\inflow}{\textrm{inflow}}
\newcommand{\outflow}{\textrm{outflow}}
\newcommand{\stock}{\textrm{stock}}
\newcommand{\flow}{\textrm{flow}}
\newcommand{\inport}{\textrm{in port}}
\newcommand{\outport}{\textrm{out port}}
\newcommand{\sumvar}{\text{sum var}}
\newcommand{\sumlink}{\text{sum link}}
\newcommand{\aux}{\text{var}}
\newcommand{\variable}{\text{var}}
\newcommand{\outlink}{\text{out link}}
\newcommand{\inlink}{\text{in link}}
\newcommand{\stocklink}{\text{stock link}}
\newcommand{\varlink}{\text{var link}}
\newcommand{\stocksumlink}{\text{stock sum link}}
\newcommand{\Rst}{\R^{F(\stock)}}
\newcommand{\Rsv}{\R^{F(\sumvar)}}
\newcommand{\Rv}{\R^{F(\aux)}}
\newcommand{\SF}{\mathsf{SF}}
\newcommand{\Hf}{\mathsf{H_f}}
\newcommand{\Mealy}{\mathsf{Mealy}}
\newcommand{\Dynam}{\Mealy}
\newcommand{\dep}{R}
\renewcommand{\xto}{\xrightarrow}
\newcommand{\xfrom}{\xleftarrow}
\newcommand{\Path}{\mathsf{Path}}
\DeclareMathOperator{\ob}{\mathsf{ob}}
\newcommand{\defined}[1]{\textbf{#1}}
\title{Dependent Directed Wiring Diagrams for Composing Instantaneous Systems}
\author{Keri D'Angelo
\institute{Cornell University\\ Ithaca, NY, USA}
\email{keri@cs.cornell.edu}
\and
Sophie Libkind
\institute{Topos Institute\\
Berkeley, CA, USA}
\email{sophie@topos.institute}
}
\tikzset{
  WD/.style={
  	label/.style={
    	font=\everymath\expandafter{\the\everymath\scriptstyle},
      inner sep=0pt,
      node distance=2pt and -2pt},
  	label distance=-2pt,
  	every to/.style={draw},
    semithick,
    node distance=\bbx and \bby,
    decoration={markings, mark=at position \stringdecpos with \stringdec},
    bb port length=3pt,
  	bb port sep=1,
		bb inside color=white,
		bb outside color=black,
	 	bbx = .4cm,
		bb min width=.4cm,
	  bby = 2ex,
	  bb penetrate=0,
	  bb rounded corners=2pt,
	  dot size=3pt,
    shell size = 16pt,
   	penetration = 0pt,
    link size = 2pt,
    shell color = blue,
  	shell inside color=\pcolor!20,
 	  shell outside color=\pcolor!50!black,
  	surround sep=2pt,
    ar/.style={postaction={decorate}},
  	execute at begin picture={\tikzset{
  		x=\bbx, y=\bby, 
			circuit logic US, tiny circuit symbols
			}
		}
  },
  beamer/.style={
  	bbx=.4cm,
		bb min width=.4cm,
		bby=6pt,
		bb port length=3pt,
		bb port sep=.5,
  	dot size=1pt,
    shell size = 11pt, 
   	penetration = 0pt,
    link size = 1pt,
    shell color = blue,
    surround sep=1pt,
    inner sep=1pt,
    font=\tiny,
    bb inside color=\picolor,
    bb outside color=\pocolor,
	},
	bb standard colors/.style={bb inside color=white, bb outside color=black},
	bb inside color/.store in=\bbicolor,
	bb outside color/.store in=\bbocolor,
  bbx/.store in=\bbx,
  bby/.store in=\bby,
  bb port sep/.store in=\bbportsep,
  bb port length/.store in=\bbportlen,
  bb penetrate/.store in=\bbpenetrate,
  bb min width/.store in=\bbminwidth,
  bb rounded corners/.store in=\bbcorners,
  bb/.code 2 args={
    \pgfmathsetlengthmacro{\bbheight}{\bbportsep * (max(#1,#2)+1) * \bby}
    \pgfkeysalso{
      draw=\bbocolor,
      fill=\bbicolor,
      minimum height=\bbheight,
      minimum width=\bbminwidth,
      outer sep=0pt,
      rounded corners=\bbcorners,
      thick,
      prefix after command={\pgfextra{\let\fixname\tikzlastnode}},
      append after command={\pgfextra{\draw
      	\ifnum #1=0{} \else foreach \i in {1,...,#1} {
        	($(\fixname.north west)!{(2*\i-1)/(2*#1)}!(\fixname.south west)$) +(-\bbportlen,0) coordinate (\fixname_in\i) -- +(\bbpenetrate,0) coordinate (\fixname_in\i')}\fi 
        \ifnum #2=0{} \else foreach \i in {1,...,#2} {
        	($(\fixname.north east)!{(2*\i-1)/(2*#2)}!(\fixname.south east)$) +(-
\bbpenetrate,0) coordinate (\fixname_out\i') -- +(\bbportlen,0) coordinate (\fixname_out\i)}\fi
;
       }}}
		},
	dot size/.store in=\dotsize,
	dot/.style={
		circle, draw, thick, inner sep=0, fill=black, minimum width=\dotsize
	},
	bb name/.style={
    append after command={
		\pgfextra{\node[anchor=north] at (\fixname.north) {#1};}
		}
	},
  shell size/.store in=\psize,
	penetration/.store in=\penetration,
  spacing/.store in=\spacing,
  link size/.store in=\lsize,
  shell color/.store in=\pcolor,
 	shell inside color/.store in=\picolor,
 	shell outside color/.store in=\pocolor,
 	surround sep/.store in=\ssep,
 	link/.style={
  	circle, 
  	draw=black, 
  	fill=black,
  	inner sep=0pt, 
 		minimum size=\lsize
 	},
  shell/.style={
 		circle, 
 		draw = \pocolor, 
  	fill = \picolor,
  	minimum size = \psize
  },
  func/.style={
  	shell,
		rectangle,
		rounded corners=.5*\psize,
		inner ysep=.125*\psize,
		minimum width=1.125*\psize,
		inner xsep=.25*\psize,
  },
  funcr/.style={
    func,
    rectangle round north west=false, 
		rectangle round south west=false,
  },
  funcl/.style={
    func,
		rectangle round north east=false, 
		rectangle round south east=false,
  },
  funcu/.style={
    func,
		rectangle round south east=false, 
		rectangle round south west=false,
  },
  funcd/.style={
    func,
		rectangle round north east=false, 
		rectangle round north west=false,
  },
  outer shell/.style={
 		ellipse, 
 		draw,
  	inner sep=\ssep,
  	color=gray,
 	},
  intermediate shell/.style={
 		ellipse,
 		dashed, 
  	draw,
  	inner sep=\ssep,
 		color=\pocolor,
 	},
 }
\tikzset{
	oriented WD/.style={
		every to/.style={out=0,in=180,draw},
    label/.style={
    	font=\everymath\expandafter{\the\everymath\scriptstyle},
      inner sep=0pt,
      node distance=2pt and -2pt},
    semithick,
    node distance=1 and 1,
    decoration={markings, mark=at position \stringdecpos with \stringdec},
    ar/.style={postaction={decorate}},
    execute at begin picture={\tikzset{
    	x=\bbx, y=\bby,
      every fit/.style={inner xsep=\bbx, inner ysep=\bby}}}
    },
    string decoration/.store in=\stringdec,
    string decoration={\arrow{stealth};},
    string decoration pos/.store in=\stringdecpos,
    string decoration pos=.7,
    bbx/.store in=\bbx,
    bbx = 1.5cm,
    bby/.store in=\bby,
    bby = 1.ex,
    bb port sep/.store in=\bbportsep,
    bb port sep=1.5,
    bb port length/.store in=\bbportlen,
    bb port length=4pt,
    bb penetrate/.store in=\bbpenetrate,
    bb penetrate=0,
    bb min width/.store in=\bbminwidth,
    bb min width=1cm,
    bb min height/.store in=\bbminheight,
    bb min height=1cm,
    bb rounded corners/.store in=\bbcorners,
    bb rounded corners=2pt,
    bb spider/.style={
    	bb port sep=1, bb port length=10pt, bbx=.4cm, bb min width=.4cm, bby=.8ex},
    bb small/.style={
    	bb port sep=1, bb port length=2.5pt, bbx=.4cm, bb min width=.4cm, bby=.7ex},
		bb medium/.style={
			bb port sep=1, bb port length=2.5pt, bbx=.4cm, bb min width=.4cm, bby=.9ex},
    bb/.code n args={4}{
    	\pgfmathsetlengthmacro{\bbheight}{\bbportsep * (max(#1,#2)+1) * \bby}
    	\pgfmathsetlengthmacro{\bbwidth}{\bbportsep * (max(#3,#4)+1) * \bby}
      \pgfkeysalso{draw,minimum height=\bbminheight,minimum
       width=\bbminwidth,outer sep=0pt,
         rounded corners=\bbcorners,thick,
         prefix after command={\pgfextra{\let\fixname\tikzlastnode}},
         append after command={\pgfextra{\draw
            \ifnum #1=0{} \else foreach \i in {1,...,#1} {
            	($(\fixname.north west)!{\i/(#1+1)}!(\fixname.south west)$) +(-\bbportlen,0) coordinate (\fixname_in\i) -- +(\bbpenetrate, 0) coordinate (\fixname_in\i')}\fi 
            \ifnum #2=0{} \else foreach \i in {1,...,#2} {
            	($(\fixname.north east)!{\i/(#2+1)}!(\fixname.south east)$) +(-
\bbpenetrate,0) coordinate (\fixname_out\i') -- +(\bbportlen,0) coordinate (\fixname_out\i)}\fi

\ifnum #3=0{} \else foreach \i in {1,...,#3} {
            	($(\fixname.north west)!{\i/(#3+1)}!(\fixname.north east)$) +(0, \bbportlen) coordinate (\fixname_top\i) -- +(0,-\bbpenetrate) coordinate (\fixname_top\i')}\fi 
\ifnum #4=0{} \else foreach \i in {1,...,#4} {
            	($(\fixname.south west)!{\i/(#4+1)}!(\fixname.south east)$) +(0, \bbpenetrate) coordinate (\fixname_bot\i) -- +(0,-\bbportlen) coordinate (\fixname_bot\i')}\fi 
;
           }}}
		},
			bb name/.style={
     	append after command={
				\pgfextra{\node[anchor=north] at (\fixname.north) {#1};}
			}
		}
  }
  \tikzset{
  	unoriented WD/.style={
  		every to/.style={draw},
  		shorten <=-\penetration, shorten >=-\penetration,
  		label distance=-2pt,
  		thick,
  		node distance=\spacing,
  		execute at begin picture={\tikzset{
  			x=\spacing, y=\spacing}}
  		},
  	pack size/.store in=\psize,
  	pack size = 8pt,
  	spacing/.store in=\spacing,
  	spacing = 8pt,
  	link size/.store in=\lsize,
  	link size = 2pt,
		penetration/.store in=\penetration,
		penetration = 2pt,
  	pack color/.store in=\pcolor,
  	pack color = blue,
  	pack inside color/.store in=\picolor,
  	pack inside color=blue!20,
  	pack outside color/.store in=\pocolor,
  	pack outside color=blue!50!black,
  	surround sep/.store in=\ssep,
  	surround sep=8pt,
  	link/.style={
  		circle, 
  		draw=black, 
  		fill=black,
  		inner sep=0pt, 
  		minimum size=\lsize
  	},
  	pack/.style={
  		circle, 
  		draw = \pocolor, 
  		fill = \picolor,
  		inner sep = .25*\psize,
  		minimum size = \psize
  	},
  	outer pack/.style={
  		ellipse, 
  		draw,
  		inner sep=\ssep,
  		color=\pocolor,
  	},
  	intermediate pack/.style={
  		ellipse,
  		dashed, 
  		draw,
  		inner sep=\ssep,
  		color=\pocolor,
  	},
  }
\tikzset{
	spider diagram/.style={
		every to/.style={out=0, in=180, draw, thick},
		thick
	},
	dot size/.store in=\dotsize,
	dot size = 5pt,
	dot fill/.store in=\dotfill,
	dot fill = black,
	leg length/.store in=\leglen,
	leg length = 15pt,
	baby/.style={dot size = 2pt, leg length = 6pt},
	young/.style={dot size = 3pt, leg length = 10pt},
	special spider/.code n args={4}{
		\pgfkeysalso{circle, draw, thick, inner sep=0, fill=\dotfill, minimum width=\dotsize,
  		prefix after command={\pgfextra{\let\fixname\tikzlastnode}},
  		append after command={\pgfextra{
  			\ifnum #1=0{} \else {\foreach \i in {1,...,#1} {
					\tikzmath{\anglei={-90*(#1+1-2*\i)/#1};}
  				\draw [thick]
						(\fixname) .. controls 
						($(\fixname.center)-(\anglei:#3/3)$) and ($(\fixname.center)-(\anglei:#3*2/3)$) .. 
						({$(\fixname)-(\anglei:#3*2/3)$}-|{$(\fixname)-(#3,0)$}) coordinate (\fixname_in\i);
  			}}\fi
  			\ifnum #2=0{} \else {\foreach \i in {1,...,#2} {
					\tikzmath{\anglei={90*(#2+1-2*\i)/#2};}
  				\draw [thick]
						(\fixname.center) .. controls 
						($(\fixname.center)+(\anglei:#4/3)$) and ($(\fixname.center)+(\anglei:#4*2/3)$) .. 
						({$(\fixname.center)+(\anglei:#4*2/3)$}-|{$(\fixname.center)+(#4,0)$}) coordinate (\fixname_out\i);
  			}}\fi
  		}}
		}
	},
	spider/.code 2 args={
		\pgfkeysalso{special spider={#1}{#2}{\leglen}{\leglen}}
	}
}
\tikzset{Yonepart/.pic={
	\node[bb={1}{2},bb name = {\tiny$X_{11}$}] (X11) {};
	\node[bb={2}{2},below right=of X11,bb name = {\tiny$X_{12}$}] (X12) {};
	\node[bb={2}{1}, above right=of X12,bb name = {\tiny$X_{13}$}] (X13) {};
	\node[bb={2}{2}, fit={($(X11.north west)+(.3,1.5)$) (X12)  ($(X13.east)+(-.3,0)$)},bb name = {\scriptsize $Y_1$}] (Y1) {};
	\draw (Y1_in1') to (X11_in1);	
	\draw (Y1_in2') to (X12_in2);
	\draw (X11_out1) to (X13_in1);
	\draw (X11_out2) to (X12_in1);
	\draw (X12_out1) to (X13_in2);
	\draw (X12_out2) to (Y1_out2');
	\draw (X13_out1) to (Y1_out1');
	\coordinate (bottombox) at ($(X12.south)$);
	\coordinate (rightbox) at ($(X13.east)$);
	\coordinate (Y1northwest) at ($(Y1.north west)$);
	}
}
\tikzset{Ytwopart/.pic={
	\node[bb={2}{2}, bb name = {\tiny$X_{21}$}] (X21) {};
	\node[bb={1}{2},above right=-1 and 1 of X21,bb name = {\tiny$X_{22}$}] (X22) {};
	\node[bb={1}{2}, fit={($(X21.south west)+(-.25,0)$) ($(X22.north east)+(.25,3.5)$)},bb name = {\scriptsize$Y_2$}] (Y2){};
	\draw (Y2_in1') to (X21_in2);
	\draw (X21_out1) to (X22_in1);
	\draw (X22_out2) to (Y2_out1');
	\draw let \p1=(X22.south east), \p2=($(Y2_out2)$), \n1={\y1-\bby}, \n2=\bbportlen in
	  (X21_out2) to (\x1+\n2,\n1) -- (\x1+\n2,\n1) to (Y2_out2');
	\draw let \p1=(X22.north east), \p2=(X21.north west), \n1={\y1+\bby}, \n2=\bbportlen in
          (X22_out1) to[in=0] (\x1+\n2,\n1) -- (\x2-\n2,\n1) to[out=180] (X21_in1);
          }
}
\tikzset{SmallNeuronPic/.pic={
 \node[bb={3}{1}] (N1) {$\scriptstyle N_1$};
  \node[bb={2}{1}, above right=.7 and 3.5 of N1] (N2) {$\scriptstyle N_2$};
  \node[bb={2}{1}, below =of N2] (N3) {$\scriptstyle N_3$};
  \node[bb={3}{1}, below =of N3] (N4) {$\scriptstyle N_4$};
  \node[bb={6}{8}, fit={($(N1.west)-(.5,0)$) ($(N2.north)+(0,2)$) ($(N3.east)+(1.5,0)$) ($(N4.south)-(0,1)$)}, bb name={$\scriptstyle X$}] (X) {};
  \draw (X_in1') to (N2_in1);
  \draw (X_in2') to (N1_in1);
  \draw (X_in3') to (N1_in2);
  \draw (X_in4') to (N1_in3);
  \draw (X_in6') to (N4_in2);
  \draw (N1_out1) to (N2_in2);
  \draw (N1_out1) to (N3_in1);
  \draw (N1_out1) to (N4_in1);
  \draw (N2_out1) to (X_out1');
  \draw (N2_out1) to (X_out2');
  \draw (N2_out1) to (X_out3');
  \draw (N3_out1) to (X_out4');
  \draw (N3_out1) to (X_out5');
  \draw (N3_out1) to (X_out6');
  \draw (N4_out1) to (X_out7');
  \draw (N4_out1) to (X_out8'); 
  \draw (X_in5') to[looseness=2] (N3_in2);
  \draw let \p1=(N4.south east), \p2=(N4.south west), \n1={\y2-\bby}, \n2=\bbportlen in
          (N3_out1) to[in=0] (\x1+\n2,\n1) -- (\x2-\n2,\n1) to[out=180] (N4_in3);
}
}
\tikzset{SmallNeuronDashed/.pic={
 \node[bb={3}{1}] (N1) {$\scriptstyle N_1$};
  \node[bb={2}{1}, above right=.7 and 3.5 of N1] (N2) {$\scriptstyle N_2$};
  \node[bb={2}{1}, below =of N2] (N3) {$\scriptstyle N_3$};
  \node[bb={3}{1}, below =of N3] (N4) {$\scriptstyle N_4$};
  \node[bb={6}{8}, fit={($(N1.west)-(.5,0)$) ($(N2.north)+(0,2)$) ($(N3.east)+(1.5,0)$) ($(N4.south)-(0,1)$)}, bb name={$\scriptstyle X$}] (X) {};
  \draw[dashed] (X_in1') to (N2_in1);
  \draw[dashed] (X_in2') to (N1_in1);
  \draw[dashed] (X_in3') to (N1_in2);
  \draw[dashed] (X_in4') to (N1_in3);
  \draw[dashed] (X_in6') to (N4_in2);
  \draw[dashed] (N1_out1) to (N2_in2);
  \draw[dashed] (N1_out1) to (N3_in1);
  \draw[dashed] (N1_out1) to (N4_in1);
  \draw[dashed] (N2_out1) to (X_out1');
  \draw[dashed] (N2_out1) to (X_out2');
  \draw[dashed] (N2_out1) to (X_out3');
  \draw[dashed] (N3_out1) to (X_out4');
  \draw[dashed] (N3_out1) to (X_out5');
  \draw[dashed] (N3_out1) to (X_out6');
  \draw[dashed] (N4_out1) to (X_out7');
  \draw[dashed] (N4_out1) to (X_out8'); 
  \draw[dashed] (X_in5') to[looseness=2] (N3_in2);
  \draw[dashed] let \p1=(N4.south east), \p2=(N4.south west), \n1={\y2-\bby}, \n2=\bbportlen in
          (N3_out1) to[in=0] (\x1+\n2,\n1) -- (\x2-\n2,\n1) to[out=180] (N4_in3);
}
}
\tikzset{SmallNestingPic/.pic={
\path (0,0) pic [purple] {Yonepart};
\path ($(rightbox)+(5,-5)$) pic [orange] {Ytwopart};
 
\node[bb={1}{2}, fit={($(Y1northwest)+(-.5,4)$) ($(Y2.south east)+(1,0)$)}, bb name={\small $Z$}] (Z) {};
\draw (Z_in1') to (Y1_in2);
\draw let \p1=(Y2.north west),\p2=(Y2.north east),\n1={\y2+\bby},\n2=\bbportlen in
  (Y1_out1) to (\x1+\n2,\n1)--(\x2+\n2,\n1) to (Z_out1');
\draw (Y1_out2) to (Y2_in1);
\draw (Y2_out2) to (Z_out2');
\draw let \p1=(Y2.north east), \p2=(Y1.north west), \n1={\y2+\bby}, \n2=\bbportlen in
          (Y2_out1) to[in=0] (\x1+\n2,\n1) -- (\x2-\n2,\n1) to[out=180] (Y1_in1);
          }
}
\tikzset{Zredgreen/.pic={
\node[bb={2}{2}, green!50!black, bb name = $\scriptstyle Y_1$] (YY1) {};
\node[bb={1}{2}, red, below right=-1 and 2 of YY1, bb name=$\scriptstyle Y_2$] (YY2) {};
\node[bb={1}{2}, fit={($(YY1.north west)+(-.5,4)$) ($(YY2.south east)+(.5,-2)$)}, bb name={\scriptsize $Z$}] (Z) {};
\draw (Z_in1') to (YY1_in2);
\draw (YY1_out1) to (Z_out1');
\draw (YY1_out2) to (YY2_in1);
\draw (YY2_out2) to (Z_out2');
\draw let \p1=(YY2.north east), \p2=(YY1.north west), \n1={\y2+\bby}, \n2=\bbportlen in
          (YY2_out1) to[in=0] (\x1+\n2,\n1) -- (\x2-\n2,\n1) to[out=180] (YY1_in1);
}
}
\tikzset{Zcombined/.pic={
	\node[bb={1}{2},green!25!black,bb name = {\tiny$X_{11}$}] (X11) {};
	\node[bb={2}{2},green!25!black,below right=of X11,bb name = {\tiny$X_{12}$}] (X12) {};
	\node[bb={2}{1}, green!25!black,above right=of X12,bb name = {\tiny$X_{13}$}] (X13) {};
	\draw (X11_out1) to (X13_in1);
	\draw (X11_out2) to (X12_in1);
	\draw (X12_out1) to (X13_in2);

	\node[bb={2}{2}, red!30!black, below right = 0 and 1.25 of X12, bb name = {\tiny$X_{21}$}] (X21) {};
	\node[bb={1}{2}, red!30!black, above right=-1 and 1 of X21,bb name = {\tiny$X_{22}$}] (X22) {};
	\draw (X21_out1) to (X22_in1);
	\draw let \p1=(X22.north east), \p2=(X21.north west), \n1={\y1+\bby}, \n2=\bbportlen in
          (X22_out1) to[in=0] (\x1+\n2,\n1) -- (\x2-\n2,\n1) to[out=180] (X21_in1);
        
        \node[bb={1}{2}, fit = {($(X11.north east)+(-1,3)$) (X12) (X13) ($(X21.south)+(0,-1)$) ($(X22.east)+(.5,0)$)}, bb name ={\scriptsize $Z$}] (Z) {};
	
	\draw (Z_in1') to (X12_in2);
	\draw (X13_out1) to (Z_out1');
	\draw (X12_out2) to (X21_in2);
	\draw let \p1=(X22.south east),\n1={\y1-\bby}, \n2=\bbportlen in
	  (X21_out2) to (\x1+\n2,\n1) to (Z_out2');
	\draw let \p1=(X22.north east), \p2=(X11.north west), \n1={\y2+\bby}, \n2=\bbportlen in
          (X22_out2) to[in=0] (\x1+\n2,\n1) -- (\x2-\n2,\n1) to[out=180] (X11_in1);
}
}
\begin{document}
\maketitle

\begin{abstract}
Directed wiring diagrams can be used as a composition pattern for composing input/output systems such as Moore machines. In a Moore machine, the input parametrizes an internal state and the internal state defines the output. 
    Because the value of the output is shielded from the input by the internal state, Moore machines can compose by connecting the output of any machine to the input of any other machine. These connections are defined by the trace wires in a directed wiring diagram. 
    Unlike Moore machines, Mealy machines allow the output to be directly and instantaneously affected by the input. In order to compose such machines via directed wiring diagrams, it is necessary to avoid cycles between trace wires in the wiring digram and dependencies of outputs on inputs. To capture these patterns of composition, we introduce an operad of \textit{dependent directed wiring diagrams}. We then define an algebra of Mealy machines on this operad and an algebra of stock and flow diagrams in which the values of auxiliary variables are parameterized by inputs. Finally, we give a semantics for this algebra of stock and flow diagrams by giving a morphism of algebras from stock and flow diagrams into Mealy machines. 
\end{abstract}


In a system with many interacting components, wiring diagrams can provide a visual yet formal representation of how the components are composed. In particular, in \textit{directed wiring diagrams}, information flows from specified sources to designated targets. These diagrams define a composition pattern in which systems influence each other's behavior while maintaining their own independent dynamics. Directed wiring diagrams give a composition pattern for composing Moore machines as well as other systems such as parameterized ordinary differential equations (ODEs), automata, and hybrid dynamical systems~\cite{lerman2020hybrid, DWD,libkind2020rsm, Moore}. Wiring diagrams are a special case of lens-based composition~\cite{jazmyers2021double, generalizelens}.

A real-valued Moore machine consists of (1) a state space $\R^S$, an input space $\R^{\Xin}$, and an output space $\R^\Xout$, (2) an update function $\R^\Xin \times \R^S \to \R^S$  and (3) a readout function $\R^S \to \R^\Xout$. To understand how directed wiring diagrams give a composition pattern for composing Moore machines, consider the following example.


\begin{example}\label{example:moore-composite}

Take two Moore machines both with $S = \Xin = \Xout = 1$ and with the following updates and readouts:
\begin{align*}
    u_1(a,x) &= a + x & u_2(b,y) &= b\\
    r_1(x) &= x & r_2(y) &= y
\end{align*}
The first Moore machine updates the state by adding the input to the state while the second Moore machine updates the state by replacing it with the input. Both Moore machines output the state directly. We can \\

\pagebreak

compose these Moore machines according to the wiring diagram below. 

\begin{center}
    \begin{tikzpicture}[oriented WD, bbx = .5cm, bby =0.25cm, bb port length=4pt, bb port sep=.75, bb min width=1cm, bb min height=1cm]
    \node[bb={0}{0}{1}{1}] (A){$x = a + x$};
    \node[bb={0}{0}{1}{1}, right= of A] (B){$y = b$};
    \node[bb={0}{0}{0}{0}, fit={($(A.north west)+(0,1.25)$)($(B.south east)+(0, -3.5)$)}] (tot){};



    \draw[ar, color=violet] let \p1=(A.south west), \p2=(A.north west), \n1=\bbportlen, \n2=\bby in
     (B_bot1') to [out=-90, in=-90] (\x1 - \bby, \y1 - \n1) to [out=90, in=-90] (\x2 - \bby, \y2 +\n1) to [out=90, in=90] (A_top1);

    \draw[ar, color=violet] 
     let \p1=(A.south east), \p2=(B.north west), \n1=\bbportlen, \n2=\bby in
     (A_bot1') to [out=-90, in=-90] (\x1 + \bby, \y1 - \n1) to [out=90, in=-90] (\x2 - \bby, \y2 +\n1) to [out=90, in=90] (B_top1);

     \draw[label]
node [right = 2pt of A_top1] {$a$}
node [left = 2pt of A_bot1, yshift = -1ex] {$x$}
;

\draw[label]
node [right = 2pt of B_top1] {$b$}
node [right = 2pt of B_bot1, yshift = -1ex] {$y$}
;

\end{tikzpicture}
\end{center}

This wiring diagram shows that the two machines interact by using the output of each machine as the input to the other.  The composite machine has $S = 2$, $\Xin = \Xout = 0$ and an update function $(x,y) \mapsto (x + y, x)$ that takes no input. This composite will compute the Fibonacci sequence when initialized with the state $x = 1$, $y = 0$.
\end{example}

This example can be formalized using the operad algebra for composing multiple real-valued Moore machines defined in~\cite{DWD}. This operad algebra consists of:
\begin{itemize}
    \item An operad of directed wiring diagram, $\mathcal{O}(\DWD)$, which defines the interfaces for each system and the composition patterns between interfaces. 
    \item An algebra $\mathcal{O}(\DWD) \to \mathcal{O}(\Set)$ which defines the systems themselves and how they compose according to the patterns established by the operad.
\end{itemize}

In a Moore machine, the output is shielded from the input by the state. Not all models adhere to this constraint. Mealy machines, in particular, generalize this behavior by allowing the output to depend on \textit{both} the input and on the state. Consequently, a Mealy machine has a readout function with signature $\R^\Xin \times \R^S \to \R^\Xout$. In the special case where $S=0$, Mealy machines are simply functions $\R^\Xin \to \R^\Xout$.

\begin{example}\label{example:loop-example}

Consider the two Mealy machines both with $S = \Xin = \Xout = 1$ and with the following updates and readouts: 
\begin{align*}
    u_1(a,x) &= a + x & u_2(b,y) &= b\\
    r_1(a,x) &= a + x & r_2(b,y) &= b
\end{align*}
These Mealy machines are nearly identical to the Moore machines presented in Example~\ref{example:moore-composite} except the state is outputted instantaneously rather than at the next time step. Suppose that we attempt to compose these Mealy machines using the same wiring pattern as in Example~\ref{example:moore-composite}. This attempt is shown in the diagram on the left below. According to this wiring diagram, the output of each machine is the input to the other. Therefore, computing the output of the first machine requires the output of the second machine. But computing the output of the second machine requires the output of the first machine. And so forth, ad infinitum. The diagram on the right below contains the cycle between the wires in the wiring diagram (shown in purple) and the dependencies of output on input (shown in blue). This cycle implies  that the composite is not computable.

\begin{center}
    \begin{tikzpicture}[oriented WD, bbx = .5cm, bby =0.25cm, bb port length=4pt, bb port sep=.75, bb min width=1cm, bb min height=1cm]
    \node[bb={0}{0}{1}{1}] (A){$x = a + x$};
    \node[bb={0}{0}{1}{1}, right= of A] (B){$y = b$};
    \node[bb={0}{0}{0}{0}, fit={($(A.north west)+(0,1.25)$)($(B.south east)+(0, -3.5)$)}] (tot){};



    \draw[ar, color=violet] let \p1=(A.south west), \p2=(A.north west), \n1=\bbportlen, \n2=\bby in
     (B_bot1') to [out=-90, in=-90] (\x1 - \bby, \y1 - \n1) to [out=90, in=-90] (\x2 - \bby, \y2 +\n1) to [out=90, in=90] (A_top1);

    \draw[ar, color=violet] 
     let \p1=(A.south east), \p2=(B.north west), \n1=\bbportlen, \n2=\bby in
     (A_bot1') to [out=-90, in=-90] (\x1 + \bby, \y1 - \n1) to [out=90, in=-90] (\x2 - \bby, \y2 +\n1) to [out=90, in=90] (B_top1);

     \draw[label]
node [right = 2pt of A_top1] {$a$}
node [left = 2pt of A_bot1, yshift = -1ex] {$a + x$}
;

\draw[label]
node [right = 2pt of B_top1] {$b$}
node [right = 2pt of B_bot1, yshift = -1ex] {$b$}
;

\end{tikzpicture}
\hspace{1in} 
\begin{tikzpicture}[oriented WD, bbx = .5cm, bby =0.25cm, bb port length=4pt, bb port sep=.75, bb min width=1cm, bb min height=1cm]
    \node[bb={0}{0}{1}{1}] (A){};
    \node[bb={0}{0}{1}{1}, right= of A] (B){};
    \node[bb={0}{0}{0}{0}, fit={($(A.north west)+(0,1)$)($(B.south east)+(0, -3)$)}] (tot){};

        \draw[color=blue, dashed] (A_top1') to [out=-90, in=90] (A_bot1);

        \draw[color=blue, dashed] (B_top1') to [out=-90, in=90] (B_bot1);

    \draw[ar, color=violet] let \p1=(A.south west), \p2=(A.north west), \n1=\bbportlen, \n2=\bby in
     (B_bot1') to [out=-90, in=-90] (\x1 - \bby, \y1 - \n1) to [out=90, in=-90] (\x2 - \bby, \y2 +\n1) to [out=90, in=90] (A_top1);

    \draw[ar, color=violet] 
     let \p1=(A.south east), \p2=(B.north west), \n1=\bbportlen, \n2=\bby in
     (A_bot1') to [out=-90, in=-90] (\x1 + \bby, \y1 - \n1) to [out=90, in=-90] (\x2 - \bby, \y2 +\n1) to [out=90, in=90] (B_top1);

\end{tikzpicture}
\end{center}

\end{example}

This example shows that we can only compose Mealy machines according to wiring patterns that avoid cycles in the flow of information between component systems and from  input to output within each component system. This necessity for acyclicity is related to the acyclicity condition for causal models~\cite{pearl2000}. In Section~\ref{section:category} we formalize such wiring patterns by extending the operad of directed wiring diagrams introduced in~\cite{DWD} to an operad of \textit{dependent directed wiring diagrams}, in which interfaces track the dependency of output on input and the composition patterns require acyclicity between these dependencies and the wires between systems. Then in Section~\ref{section:dynamsection} we formalize the composition of Mealy machines according to these composition patterns as an algebra of this operad. The software package AlgebraicDynamics implements this composition of Mealy machines and can be found on GitHub at \url{https://algebraicjulia.github.io/AlgebraicDynamics.jl/}.

As in~\cite{DWD}, we will in fact construct a symmetric monoidal category of dependent directed wiring diagrams and a lax monoidal functor that defines the composition of Mealy machines. However, we often refer to their underlying operad and operad algebra to emphasize that this formalism can be used for composing multiple, independent systems. The operadic approach inherently accommodates $n$-ary operations, eliminating the need to break the operations down into binary operations.

In the tradition of systems dynamics, stock and flow diagrams (hereafter, called ``stock-flow diagrams'' for brevity) are used to graphically model certain classes of differential equations. In these models, stocks represent state variables and flows represent the transformation of one stock into another. Additionally, these models often contain (1) auxiliary variables, which are intermediate, derived quantities that are often of independent, domain importance and (2) links, which capture the instantaneous dependency of variables of stocks, on other auxiliary variables, and on exogenous variables. 

\cite{sfcomp} formalizes stock-flow diagrams and defines their composition by identifying the stocks of component systems, using the decorated cospan approach to composition~\cite{fong2015decorated}. In Section~\ref{section:SFsection}, we extend this formalization of stock-flow diagrams to include (1) links between auxiliary variables, (2) exogenous variables and links from exogenous variables to auxiliary variables, and (3) outputs. We also define alternative composition of stock-flow diagrams according to dependent directed wiring diagrams in which a stock-flow diagram's exogenous variables are set by the outputs of other diagrams. Finally, we show how to interpret a stock-flow diagram as a parameterized differential equation and prove that this interpretation is compatible with composition.

\paragraph{Acknowledgments} This material is based upon work supported by the Air Force Office of Scientific Research under award numbers FA9550-23-1-0376.

\section{Preliminaries}\label{section:prelims}

\subsection{Dependencies for functions of Euclidean spaces}\label{sec:functions}

The systems examined in this paper allow for the output to depend on both on the state and on the input into the system, creating dependencies between input and output ports. A natural framework for capturing these dependencies is by relations, represented as spans to indicate which outputs depend on which inputs. In this subsection, we provide background on dependencies for functions of Euclidean spaces.


Consider finite sets $A$ and $B$ and a map between them $f \colon A \to B$. We view an element of $\R^A$ to be a set function $x \colon A \to \R$. 
\begin{itemize}
    \item The pullback $f^* \colon  \R^B \to \R^A$ is defined by $f^*(x) \coloneqq x \circ f$.
    \item The pushforward $f_* \colon  \R^A\to\R^B$ is defined by  $f_*(x)(b) \coloneqq  \sum_{a\in f^{-1}(b)} x(a)$. 
\end{itemize}

\begin{definition}\label{definition:respectsrelation}

A function $\phi \colon  \R^A\to\R^B$ is said to respect a relation $ A \xleftarrow{s} R \xrightarrow{t} B$ if for all $b\in B$, the following diagram commutes
\[\begin{tikzcd}
	{\R^A} & {\R^B} \\
	{\R^{s( t^{-1}(b))}} & \R
	\arrow["\phi", from=1-1, to=1-2]
	\arrow["{i^*}"', from=1-1, to=2-1]
	\arrow["{\pi_b}", from=1-2, to=2-2]
	\arrow["{\phi_b}"', from=2-1, to=2-2]
\end{tikzcd}\]
where $\phi_b$ is the composite $\R^{s( t^{-1}(b))} \xrightarrow{i_*} \R^A \xrightarrow{\phi} \R^B \xrightarrow{\pi_b} \R$.
 
\end{definition}

Explicitly, $\phi$ respects the relation iff for all $x \in \R^A$ and $b \in B$, the value $\phi(x)(b) \in \R$ only depends on the components of $x$ that are related to $b$.

Note that if the relation $A \leftarrow R \rightarrow B$ is contained in the relation $A \leftarrow R' \rightarrow B$
then $\phi \colon \R^A \to \R^B$ respects $R$ implies that $\phi$ also respects $R'$.

\begin{restatable}[]{lemma}{compositeofrespectsrespects}\label{lem:composite-of-respects-respects}
If $f \colon \R^A \to \R^B$ respects $A \leftarrow R \to B$ and $g \colon \R^B \to \R^C$ respects $B \leftarrow S \to C$, then $ g\circ f \colon  \R^A \to \R^C$ respects the composite  span $A  \leftarrow R \times_B S \to C$.
\end{restatable}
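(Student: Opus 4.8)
The plan is to avoid chasing the square in Definition~\ref{definition:respectsrelation} directly, and instead use the pointwise reformulation stated right after it: a function $\phi \colon \R^A \to \R^B$ respects $A \xleftarrow{s} R \xrightarrow{t} B$ if and only if, for every $b \in B$, the value $\phi(x)(b)$ depends only on the values $x(a)$ with $a \in s(t\inv(b))$ — equivalently, whenever $x, x' \in \R^A$ agree on the subset $s(t\inv(b)) \subseteq A$, we have $\phi(x)(b) = \phi(x')(b)$. This reformulation turns the claim into a short dependency-tracking argument and sidesteps the bookkeeping with the maps $i_*$, $i^*$, and $\pi_b$.

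First I would compute the subset of $A$ that the composite span attaches to a point $c \in C$. Write the legs of $R$ as $s_R \colon R \to A$, $t_R \colon R \to B$ and the legs of $S$ as $s_S \colon S \to B$, $t_S \colon S \to C$. An element of the pullback $R \times_B S$ is a pair $(r, \sigma)$ with $t_R(r) = s_S(\sigma)$; the left leg of the composite span sends it to $s_R(r)$ and the right leg to $t_S(\sigma)$. Hence the subset of $A$ attached to $c$ is
\[
  \{\, a \in A \mid \exists\, b \in B \text{ with } a \in s_R(t_R\inv(b)) \text{ and } b \in s_S(t_S\inv(c)) \,\},
\]
i.e.\ the usual relational composite: $a$ is related to $c$ precisely when some intermediate $b$ is related to $a$ on the $R$ side and to $c$ on the $S$ side.

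Now fix $c \in C$ and suppose $x, x' \in \R^A$ agree on this subset; I want $(g \circ f)(x)(c) = (g \circ f)(x')(c)$. Since $g$ respects $S$, the value $g(f(x))(c)$ depends only on the restriction of $f(x)$ to $s_S(t_S\inv(c))$, so it is enough to check that $f(x)(b) = f(x')(b)$ for every $b \in s_S(t_S\inv(c))$. Fix such a $b$. Since $f$ respects $R$, the value $f(x)(b)$ depends only on the restriction of $x$ to $s_R(t_R\inv(b))$; but every $a \in s_R(t_R\inv(b))$ is related to $b$ on the $R$ side and $b$ is related to $c$ on the $S$ side, so $a$ belongs to the subset of $A$ displayed above, where $x$ and $x'$ agree. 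Therefore $f(x)(b) = f(x')(b)$, hence $g(f(x))(c) = g(f(x'))(c)$, which is what we wanted.

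I do not expect a genuine obstacle here: the one place to be careful is the first step, namely checking that the pullback over $B$ really implements the existential quantifier ``there is an intermediate $b$'' in the composite relation; once that is pinned down, the lemma is just two applications of the hypothesis, one after the other. A more syntactic alternative would be to paste the two instances of the square of Definition~\ref{definition:respectsrelation} and verify that the relevant restriction maps compose on the nose, but this essentially reproves the pointwise criterion with additional index juggling, so I would not take that route.
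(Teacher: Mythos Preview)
Your proposal is correct and follows essentially the same route as the paper: both use the pointwise reformulation of Definition~\ref{definition:respectsrelation}, identify the subset of $A$ attached to $c$ under the composite span as $\bigcup_{b \in s_S(t_S\inv(c))} s_R(t_R\inv(b))$, and then chain the two dependency hypotheses through the intermediate $b$'s. Your version phrases the dependency as ``$x$ and $x'$ agree on the relevant subset implies the outputs agree'' while the paper stays with the informal ``only depends on'' language, but the argument is the same.
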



    

\begin{notation}
For a span $ A \xleftarrow{s} R \xrightarrow{t} B$, we will write $R^* $ for the composite $t_* \circ s^* \colon \R^A \to \R^B$. For $x \in \R^A$ and $b \in B$, the value of $R^*(x)$ at $b$ is the sum of the values of $x$ at the the coordinates of $A$ to which $b$ is related.
\end{notation}

\begin{restatable}[]{lemma}{upperstarrespects}\label{lem:upper-star-respects}
    Given a relation $A \xleftarrow{s} R \xrightarrow{t} B$, the map $R^* \colon \R^A \to \R^B$ respects the relation.
\end{restatable}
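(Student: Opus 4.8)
The plan is to verify the commuting square in Definition~\ref{definition:respectsrelation} directly, by computing both legs explicitly. First I would unwind the definition of $R^* = t_* \circ s^*$: for $x \in \R^A$ and $b \in B$,
\[
R^*(x)(b) = \bigl(t_*(s^*(x))\bigr)(b) = \sum_{r \in t^{-1}(b)} s^*(x)(r) = \sum_{r \in t^{-1}(b)} x(s(r)).
\]
This already exhibits $R^*(x)(b)$ as a function of the restriction of $x$ to the subset $s(t^{-1}(b)) \subseteq A$, which is the informal content of ``respecting the span'' recorded after Definition~\ref{definition:respectsrelation}; the rest of the work is just packaging this into the square.

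Write $i \colon s(t^{-1}(b)) \hookrightarrow A$ for the inclusion, so that $i^*$ is restriction of a function on $A$ to this subset and $i_*$ is extension by zero (since $i$ is injective, each fiber $i^{-1}(a)$ is empty or a singleton). I would then compute $\phi_b = \pi_b \circ R^* \circ i_*$: for $y \in \R^{s(t^{-1}(b))}$,
\[
\phi_b(y) = R^*(i_*(y))(b) = \sum_{r \in t^{-1}(b)} (i_*y)(s(r)) = \sum_{r \in t^{-1}(b)} y(s(r)),
\]
where the last equality uses that $s(r) \in s(t^{-1}(b))$ for every $r \in t^{-1}(b)$, so extending by zero leaves those values untouched. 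Composing with $i^*$ then gives $\phi_b(i^*(x)) = \sum_{r \in t^{-1}(b)} x(s(r)) = R^*(x)(b) = \pi_b(R^*(x))$, hence $\pi_b \circ R^* = \phi_b \circ i^*$ for every $b \in B$, i.e.\ the square commutes.

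There is no real obstacle beyond bookkeeping; the one point needing a moment's care is the interaction of the pushforward $i_*$ with the sum defining $R^*$ — namely that the summands $y(s(r))$ are indexed by elements already lying in the subset $s(t^{-1}(b))$, so $i_*$ does not change them.

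As an alternative route I could instead observe that $s^* \colon \R^A \to \R^R$ respects the span $A \xleftarrow{s} R \xrightarrow{\id} R$ and that $t_* \colon \R^R \to \R^B$ respects $R \xleftarrow{\id} R \xrightarrow{t} B$ (each a one-line instance of the computation above), and then invoke Lemma~\ref{lem:composite-of-respects-respects} together with the observation that the pullback $R \times_R R$ is just $R$ equipped with the legs $s$ and $t$. Either way the argument is short; I would present the direct diagram chase.
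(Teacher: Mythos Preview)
Your proposal is correct and follows essentially the same approach as the paper: both verify the square of Definition~\ref{definition:respectsrelation} by computing each leg as $\sum_{r \in t^{-1}(b)} x(s(r))$, using that $i$ (the paper's $\iota$) is injective so $i_*$ is extension by zero and leaves the summands unchanged. Your presentation is slightly more streamlined in that you first compute $\phi_b$ on a general $y$ and then substitute $y = i^*(x)$, whereas the paper unwinds the full composite $t_* s^* \iota_* \iota^*$ in one pass, but the content is identical.
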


\subsection{Dynamics on graphs}\label{sec:dynamics}

A graph $G = (V,E)$ consists of a set of vertices $V$, a set of edges $E$, and source and target maps $s, t \colon E \to V$. It will be useful to think of $G$ as the span $V \xleftarrow{s} E \xrightarrow{t} V$.  Given a graph $G$, the graph $\Path(G)$ has the same vertices as in $G$ and its edges are the paths of edges in $G$.

\begin{restatable}[]{lemma}{first}\label{lemma:first}

Let $G$ be a directed acyclic graph (DAG). Consider the endomorphism $\phi \colon \R^{G(V)}\to\R^{G(V)}$. If $\phi$ respects the relation $G(V) \xleftarrow{s} \Path(G)(E) \xrightarrow{t} G(V)$, then $\phi$  has a unique fixed point.

\end{restatable}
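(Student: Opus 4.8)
The plan is to show that a sufficiently high iterate of $\phi$ is a constant map, and to read off existence and uniqueness of the fixed point from that. The starting observation is elementary: if in a span $A \xleftarrow{s} R \xrightarrow{t} B$ the apex $R$ is empty, then $t^{-1}(b) = \emptyset$ for every $b \in B$, so by the characterization following Definition~\ref{definition:respectsrelation} a function $\psi \colon \R^A \to \R^B$ respecting this span has $\psi(x)(b)$ independent of $x$ for each $b$; that is, $\psi$ is constant.

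Next I would iterate Lemma~\ref{lem:composite-of-respects-respects}. Taking $f = g = \phi$ there shows that $\phi \circ \phi$ respects the self-composite span $G(V) \leftarrow \Path(G)(E) \times_{G(V)} \Path(G)(E) \to G(V)$, and, by induction, that for each $n \geq 1$ the iterate $\phi^{\,n}$ respects the $n$-fold self-composite span. The apex of that span is the set of composable $n$-tuples $(p_1, \dots, p_n)$ of edges of $\Path(G)$ — that is, of nonempty directed paths of $G$ with the target of $p_i$ equal to the source of $p_{i+1}$ — and concatenating such a tuple produces a single directed path of $G$ with at least $n$ edges.

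Finiteness and acyclicity now close the argument. Put $n = \lvert G(V) \rvert$; the case $n = 0$ is trivial, so assume $n \geq 1$. A directed path of $G$ with at least $n$ edges visits a sequence of at least $n + 1$ vertices, and since $G$ has only $n$ vertices it must revisit one, producing a directed cycle — impossible in a DAG. Hence the apex of the $n$-fold self-composite span is empty, so by the opening observation $\phi^{\,n}$ is constant, say with value $c \in \R^{G(V)}$. Then $\phi(c) = \phi(\phi^{\,n}(x)) = \phi^{\,n}(\phi(x)) = c$ for any $x$, so $c$ is a fixed point of $\phi$; and if $\phi(y) = y$ then iterating $\phi$ yields $y = \phi^{\,n}(y) = c$, so $c$ is the unique one.

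The computations are routine; the one point requiring genuine care is the convention that the edges of $\Path(G)$ are paths of \emph{positive} length, so that (by acyclicity) no vertex is $\Path(G)$-related to itself — without this $\phi = \id$ would respect the span yet fix the whole space — together with the bookkeeping that composing the span with itself adds minimum path lengths, which is what forces the $n$-fold composite to be empty. If one wishes to avoid using finiteness of $G$ in this form, the same conclusion follows by well-founded recursion along a topological order of $G$: since $\phi(x)(v)$ depends only on the coordinates $x(u)$ with a directed path $u \to v$, all preceding $v$ in the order, the value of the fixed point at each vertex is forced, and defining it recursively gives existence and uniqueness simultaneously.
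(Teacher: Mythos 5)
Your proof is correct, but it takes a genuinely different route from the paper's. The paper reduces to the case of a chain: a topological sort gives a graph map $G \to \Path(\mathsf{ch}(N))$, so $\phi$ also respects the coarser chain relation, and the fixed point is then built coordinate-by-coordinate, $\bar x_k = \phi_k(\bar x_0,\dots,\bar x_{k-1})$ --- essentially the ``well-founded recursion along a topological order'' you mention only as an aside. You instead iterate Lemma~\ref{lem:composite-of-respects-respects}: $\phi^{\,n}$ respects the $n$-fold self-composite of the path span, whose apex is empty once $n = \lvert G(V)\rvert$ because a composable tuple of nonempty paths would concatenate to a walk revisiting a vertex in a DAG; hence $\phi^{\,n}$ is constant and both existence and uniqueness of the fixed point drop out of $\phi(c)=\phi(\phi^{\,n}(x))=\phi^{\,n}(\phi(x))=c$ and $y=\phi^{\,n}(y)=c$. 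What your approach buys: uniqueness is immediate (the paper's proof constructs the point and leaves the forcing argument for uniqueness implicit), it reuses an already-proved lemma rather than a fresh induction, and it isolates exactly where acyclicity and the positive-length convention for $\Path(G)$ enter --- your observation that with length-zero paths allowed the identity would be a counterexample is a genuinely useful clarification of the paper's definition. What the paper's approach buys: it is constructive in a way that matches how fixed points are actually computed later (e.g.\ $\ovarphi$ via topological sort in the SIR example), and the same inductive scheme is reused for Lemma~\ref{lemma:fpancestors}, whereas your iteration argument establishes existence and uniqueness without exhibiting the dependency structure of the fixed point.
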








For a DAG $G = (V, E)$, suppose that we have a function $\phi \colon \R^V \to \R^V$ such that the value of $\phi$ at a vertex $v$ only depends on the values of the parents of $v$ (i.e., vertices with edges pointing directly into $v$). Then given a point $a \in \R^V$, we can push $a$ through $\phi$ inductively by $x_0 = a$ and $x_{i+1} = \phi(x_i) + a$. Then $x_{|V|}$ is in fact a fixed point of $\phi(-) + a$ and the value of $x_{|V|}$ at a vertex $v$ only depends on the value of $a$ at ancestors of $v$ (i.e., vertices from which there is a directed path to $v$). This observation is captured by the following lemma.

\begin{restatable}[]{lemma}{fpancestors}\label{lemma:fpancestors}

Let $G=(V,E)$ be an acyclic DAG, where $\phi \colon \R^V\to\R^V$ respects the relation $V \xleftarrow{s} E \xrightarrow{t} V$
In other words, for $v \in V$, $\pi_v \circ \phi \colon \R^V \to \R$ only depends on the parents of $v$.
Let $f \colon \R^V\to\R^V$ map $a \in \R^V$ to the unique fixed point of $\phi(-) + a \colon \R^V \to \R^V$. Then $f$ respects the relation $V \xleftarrow{s} \mathsf{Path}(G) \xrightarrow{t} V$.
\end{restatable}

\subsection{Directed wiring diagrams}\label{sec:dwd}
The symmetric monoidal category of directed wiring diagrams, denoted $\DWD$ consists of:

\begin{itemize}
\item Objects are pairs $\binomx$ where $\Xin$ is a finite set of input ports and $\Xout$ is a finite set of output ports.
\item  Morphisms $f \colon \binomx \to \binomy$ in $\DWD$ are diagrams of finite sets:

\begin{equation}\label{diagram:morphism}
\begin{tikzcd}[column sep=small, row sep = small]
	& {f(\Win)} && {f(\W)} && {f(\Wout)} \\
	\Yin && \Xin && \Xout && \Yout
	\arrow["s"', from=1-2, to=2-1]
	\arrow["t", from=1-2, to=2-3]
	\arrow["t"', from=1-4, to=2-3]
	\arrow["s", from=1-4, to=2-5]
	\arrow["s"', from=1-6, to=2-5]
	\arrow["t", from=1-6, to=2-7]
\end{tikzcd}
\end{equation}

The sets  $f(\Win)$, $f(\W)$, and $f(\Wout)$  represent input, trace, and output wires. The maps $s$ and $t$ denote the source and target maps.  We call such a morphism a \defined{directed wiring diagram}. 
\end{itemize}

Composition in $\DWD$ is defined as follows. For $f \colon \binomx \to \binomy$ and $g \colon \binomy \to \binomz$, their composite is
\begin{align*}
    (g\circ f)(\Win) &= g(\Win) \times_\Yin f(\Win)\\
    (g\circ f)(\W) &= f(\W) + f(\Wout) \times_\Yout g(\W) \times_\Yin f(\Win) \\
    (g \circ f)(\Wout) & = g(\Wout) \times_\Yout f(\Wout).
\end{align*}
The monoidal product in $\DWD$ is given by coproducts of finite sets, and in fact $\DWD$ is a cocartesian monoidal category.
\Cref{fig:exampledwd} shows an example of a directed wiring diagram.

\begin{figure}[htb]
    \centering
    \begin{tikzpicture}[oriented WD, bbx = .5cm, bby =0.25cm, bb port length=4pt, bb port sep=.75, bb min width=1cm, bb min height=1cm]
    \node[bb={0}{0}{2}{2}] (A){};
    \node[bb={0}{0}{1}{1}, right=of A] (B){};
    
    \node[bb={0}{0}{3}{3}, fit={($(B.north east)+(0,2)$)($(A.south west)+(0, -3.5)$)($(B.south west)+(0, 2)$)($(A.south east)+(0, -3.5)$)}] (tot){};
    
    \draw[ar, color=violet] (tot_top1') to [out=-90, in=90] (A_top1);

     \draw[ar, color=violet] (tot_top2') to [out=-90, in=90] (A_top2);

    \draw[ar, color=violet] (tot_top1') to [out=-90, in=90] (A_top1);

  \draw[ar, color=violet] (A_bot1) to [out=-90, in=90] (tot_bot1);

   \draw[ar, color=violet] (tot_top2) to [out=-90, in=90] (B_top1);

    \draw[ar, color=violet] (B_bot1) to [out=-90, in=90] (tot_bot3);
        
   \draw[ar, color=violet] (A_bot1) to [out=-90, in=90] (tot_bot3);

    \draw[ar, color=violet] let \p1=(A.south east), \p2=(A.north east), \n1=\bbportlen, \n2=\bby in (A_bot2') to [out=-90, in=-90] (\x1 + \bby, \y1 - \n1) to [out=90, in=-90] (\x2 + \bby, \y2 +\n1) to [out=90, in=90] (B_top1);

\end{tikzpicture}
    \caption{A directed wiring diagram $\binom{2}{2}  + \binom{1}{1} \to \binom{3}{3}$. There are three input wires, a single trace wire, and three output wires.}
    \label{fig:exampledwd}
\end{figure}
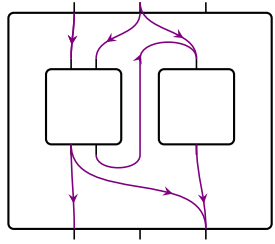

Libkind et al. \cite{DWD} defines a lax monoidal functor $F \colon \left(\DWD, + , \binom{0}{0}\right) \to (\Set, \times, 1)$ that defines the composition of Moore machines along directed wiring diagrams. On objects, $F$ maps an interface $\binomx$ to the set of Moore machines $(S \colon \Finset, r \colon \R^S \to \R^{\Xout} , u \colon \R^{\Xin} \times \R^S \to \R^S)$
where $S$ is the set of states, $r$ is the readout map, and $u$ is the update map. On morphisms, a directed wiring diagram $f \colon \binomx \to \binomy$ maps $(S, r, u)$ to the Moore machine with states $S$, readout map $f(\Wout)^* \circ r \colon \R^S \to \R^{\Yout}$
and update map taking $(a, x) \in \R^{\Yin} \times \R^S$ to 
\[
    u(f(\Win)^*(a) + f(W)^*(r(x)), x).
\]

Since $F$ is lax monoidal it has an underlying operad algebra. From the operadic perspective, given a directed wiring diagram 
\[
    f \colon \binom{X_{1, \text{in}}}{X_{1, \text{out}}} + \cdots  + \binom{X_{k, \text{in}}}{X_{k, \text{out}}} \to \binomy,
\] the set function
\[
    Ff \colon F\binom{X_{1, \text{in}}}{X_{1, \text{out}}} \times \cdots  \times F\binom{X_{k, \text{in}}}{X_{k, \text{out}}} \to F\binomy
\]
composes $k$ Moore machines with interfaces $\binom{X_{i, \text{in}}}{X_{i, \text{out}}}$ according to the directed wiring diagram $f$. The composite is a Moore machine with interface $\binomy$.

\section{Dependent directed wiring diagrams}\label{section:category}

In this section, we build on the category of directed wiring diagrams to define a category $\dDWD$ of dependent directed wiring diagrams. Just as the objects of $\DWD$ were interfaces for Moore machines, we will see in Section~\ref{section:dynamsection} and Section~\ref{section:SFsection} that the objects of $\dDWD$ will be interfaces of instantaneous systems including Mealy machines and stock-flow diagrams. These interfaces will track the dependencies of outputs on inputs and its morphisms will ensure that there are no cycles created by trace wires and these dependencies.

We  define the category $\dDWD$ in two steps. First, we define a lax monoidal functor $\dep \colon \DWD \to \Poset$ which maps an object $\binomx$ to the poset of relations between $\Xin$ and $\Xout$. If $\xin \in \Xin$ is related to $\xout \in \Xout$ then we say that the output $\xout$ depends on the input $\xin$. Second, we define $\dDWD$ to be the wide subcategory of the Grothendieck construction $\int \dep$ that only contains acyclic morphisms.


\begin{restatable}[]{proposition}{Grothendieckfunctor}
There is a symmetric lax monoidal functor $\dep \colon\left( \DWD, + , \binom{0}{0}\right) \to (\Poset, \times, 1)$ such that:

\begin{itemize}
    \item On objects, $\binomx$ maps to the poset of relations between $\Xin$ and $\Xout$. We often notate a relation by a span $\Xin \xfrom{s} d_X \xto{t} \Xout$ to emphasize that the relation represents a dependency of outputs on inputs. 

    \item On morphisms,  $f \colon \binomx \to \binomy$ in $\DWD$ maps to a map of posets $\dep(f) \colon  \dep\binomx\to \dep\binomy$ defined as follows. For each relation $d_X\in \dep(\binomx)$, the relation  $\dep(f)(d_X)$ is the relation induced by the span
\begin{equation}\label{eq:R(f)}
    \Yin \xfrom{s} f(\Win) \times_X \Path(f(W) +_X d_X) \times_X f(\Wout) \xto{t} \Yout
\end{equation}
where $X = \Xin + \Xout$ and $f(W) +_X d_X$ is the graph whose vertices are $X$ and whose edges are given by the trace wires $f(W)$ and the relation $d_X$. We name this graph $f(W) +_X d_X$ because it is the pushout in the category of graph of the graphs $f(W)$ and $d_X$ with apex the discrete graph on $X$. 
\end{itemize}

\end{restatable}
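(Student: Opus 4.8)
The plan is to verify the three things bundled into the statement in turn: that $\dep(f)$ is well-defined as a monotone map of posets, that $\dep$ respects identities and composition (functoriality), and that $\dep$ carries the coproduct/monoidal structure of $\DWD$ to the product structure of $\Poset$ laxly (indeed strictly, since it sends $\binom{0}{0}$ to the one-element poset and $+$ to $\times$). Throughout, the key object to understand is the graph $f(W) +_X d_X$ on vertex set $X = \Xin + \Xout$ whose edges are $f(W) \sqcup d_X$, and the span in \eqref{eq:R(f)} built from paths in it, restricted at the source to input ports (via the pullback with $f(\Win)$, using the target map of input wires into $\Xin \subseteq X$) and at the target to output ports (via the pullback with $f(\Wout)$, using the source map of output wires out of $\Xout \subseteq X$).

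First I would check well-definedness and monotonicity. Given $d_X \subseteq d_X'$ as relations between $\Xin$ and $\Xout$, there is an inclusion of graphs $f(W) +_X d_X \hookrightarrow f(W) +_X d_X'$ over the identity on $X$, hence an inclusion on path graphs $\Path(f(W) +_X d_X) \hookrightarrow \Path(f(W) +_X d_X')$, hence (pullbacks and pushforwards of spans being monotone in the middle object) an inclusion of the spans \eqref{eq:R(f)} and therefore of the induced relations $\dep(f)(d_X) \subseteq \dep(f)(d_X')$ in $\dep\binom{\Yin}{\Yout}$. That $\dep(f)(d_X)$ is genuinely a relation between $\Yin$ and $\Yout$ is immediate from the span having legs into $\Yin$ and $\Yout$.

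Next, functoriality. For the identity $\id \colon \binomx \to \binomx$ in $\DWD$ one has $\id(\Win) = \Xin$, $\id(\Wout) = \Xout$ with the obvious maps, and $\id(W) = \emptyset$; so $\id(W) +_X d_X$ is the graph $d_X$ on $X$, and the span \eqref{eq:R(f)} pulls back $\Path(d_X)$ to the part of it running from $\Xin$ to $\Xout$, which (since $d_X$ already only has edges from $\Xin$ to $\Xout$, so paths of length $\ge 2$ cannot exist between input and output ports — there are no edges out of $\Xout$ or into $\Xin$ within $d_X$) recovers $d_X$ itself. Hence $\dep(\id) = \id$. For composition, given $f \colon \binomx \to \binomy$ and $g \colon \binomy \to \binomz$ I would show $\dep(g \circ f) = \dep(g) \circ \dep(f)$ by exhibiting a span isomorphism between the span defining the left side and the composite span defining the right side. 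The essential combinatorial fact is that a path in the graph $(g\circ f)(W) +_{Y+Z'} \dep(f)(d_X)$ — wait, more precisely in $g(W) +_Y (\dep(f)(d_X))$ restricted between $\Zin$ and $\Zout$ — decomposes uniquely by "zooming in" on each edge of $\dep(f)(d_X)$, replacing it with the corresponding path in $f(W) +_X d_X$ between the relevant $\Yin$ and $\Yout$ ports, and each trace wire $g(W)$ edge stays as is; one must reconcile this with the explicit formula for $(g\circ f)(\Win)$, $(g\circ f)(W)$, $(g\circ f)(\Wout)$ recalled in the excerpt. This reconciliation, carried out as an isomorphism of sets over $\Zin \times \Zout$ respecting the span legs, is the heart of the proof and the step I expect to be the main obstacle: it is a bookkeeping argument about decomposing paths through a two-level wiring diagram into interleaved segments, and getting the pullbacks (the $\times_Y$ and $\times_X$ conditions gluing the segments at shared ports) to match requires care. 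Lemma~\ref{lem:composite-of-respects-respects} and Lemma~\ref{lem:upper-star-respects} are not literally needed here but the same "composite span" philosophy underlies the argument.

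Finally, lax monoidality. Since the monoidal unit $\binom{0}{0}$ has a unique relation (the empty one) and $\dep\binom{0}{0}$ is the terminal poset $1$, the unit comparison $1 \to \dep\binom{0}{0}$ is an isomorphism. For the binary comparison, given $\binomx$ and $\binom{\Yin}{\Yout}$, a relation in $\dep(\binomx + \binom{\Yin}{\Yout})$ is a relation between $\Xin + \Yin$ and $\Xout + \Yout$, and I would send a pair $(d_X, d_Y)$ to its disjoint union $d_X \sqcup d_Y$; this is clearly monotone and in fact realizes $\dep\binomx \times \dep\binom{\Yin}{\Yout}$ as the sub-poset of "block-diagonal" relations, so the comparison is a monotone map (it need not be an iso, which is why the functor is merely lax). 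Naturality in $f$ and $g$ reduces to the observation that the path-graph and pullback constructions in \eqref{eq:R(f)} commute with disjoint union, since a morphism $f + g$ in $\DWD$ has wire sets the disjoint unions of those of $f$ and $g$ and no wires cross between the two blocks; and the coherence axioms (associativity and unit pentagon/triangle) then hold automatically because on each side they are just the corresponding coherences for disjoint union of relations. I would close by remarking that all the associativity/unit diagrams commute on the nose since $+$ of relations is strictly associative and unital, so the only genuine content is the single composition identity $\dep(g\circ f) = \dep(g)\circ\dep(f)$ established above.
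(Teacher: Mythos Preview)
Your proposal is correct and follows essentially the same approach as the paper: monotonicity by inclusion of graphs, identity preservation from the absence of trace wires, composition by an explicit bijection between the tuple descriptions of the two spans (your ``zooming in'' is exactly the paper's enumeration of interleaved $(\win, d, w, \ldots, \wout, w')$ sequences), and lax monoidality via coproduct of relations together with the observation that paths and pullbacks respect disjoint union (the paper's Lemma~\ref{lem:isospans}). The only minor point of care, present in both your sketch and the paper's argument, is that the two spans in the composition step are compared before passing to the induced relations, so one should note that replacing the inner span by the relation it induces does not change the outer induced relation.
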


\begin{figure}[h]
\centering
\begin{subfigure}[b]{0.5\textwidth}
    \centering
     \begin{tikzpicture}[oriented WD, bbx = .5cm, bby =0.25cm, bb port length=4pt, bb port sep=.75, bb min width=1cm, bb min height=1cm]
    \node[bb={0}{0}{2}{2}] (A){};
    
    \node[bb={0}{0}{3}{3}, fit={($(A.north west)+(0,1.5)$)($(A.south east)+(0, -1.5)$)}] (tot){};
    
    \draw[ar, color=violet] (tot_top1') to [out=-90, in=90] (A_top1);

     \draw[ar, color=violet] (tot_top2') to [out=-90, in=90] (A_top2);

    \draw[ar, color=violet] (tot_top1') to [out=-90, in=90] (A_top1);  

    \draw[color=blue, dashed] (A_top1') to [out=-90, in=90] (A_bot1);

  \draw[color=blue, dashed] (A_top2') to [out=-90, in=90] (A_bot2);

  \draw[ar, color=violet] (A_bot1) to [out=-90, in=90] (tot_bot1);

    \draw[ar, color=violet] (A_bot2) to [out=-90, in=90] (tot_bot3);

         \draw[ar, color=violet] let \p1=(A.south east), \p2=(A.north east), \n1=\bbportlen, \n2=\bby in
     (A_bot1') to [out=-90, in=-90] (\x1 + \bby, \y1 - \n1) to [out=90, in=-90] (\x2 + \bby, \y2 +\n1) to [out=90, in=90] (A_top2);

\end{tikzpicture} 
    \vspace{-.3in}
    \subcaption{}
\end{subfigure}\hfill
\begin{subfigure}[b]{0.5\textwidth}
    \centering
     \begin{tikzpicture}[oriented WD, bbx = .5cm, bby =0.25cm, bb port length=4pt, bb port sep=.75, bb min width=1cm, bb min height=1cm]

    \node[bb={0}{0}{3}{3}, fit={($(A.north west)+(0,1.5)$)($(A.south east)+(0, -1.5)$)}] (tot){};

  \draw[color=blue, dashed] (tot_top1') to [out=-90, in=90] (tot_bot1);

   \draw[color=blue, dashed] (tot_top1') to [out=-90, in=90] (tot_bot3);

    \draw[color=blue, dashed] (tot_top2') to [out=-90, in=90] (tot_bot3);

\end{tikzpicture}
     \vspace{-.3in}
     \subcaption{}
     \end{subfigure}    
    \caption{(a) A directed wiring diagram $f \colon \binom{2}{2} \to \binom{3}{3}$ is depicted in purple. The blue dashed lines show a dependency $d_X \in \dep\binom{2}{2}$ where the first output port depends solely on the first input port and the second output port depends solely on the second input port. (b) In the resulting dependency $\dep(f)(d_X) \in \dep\binom{3}{3}$ the first output port depends on the first input port, the second output port has no dependencies, and the third output port depends on the first and second input ports.} 

\label{fig:fullfig}
\end{figure}
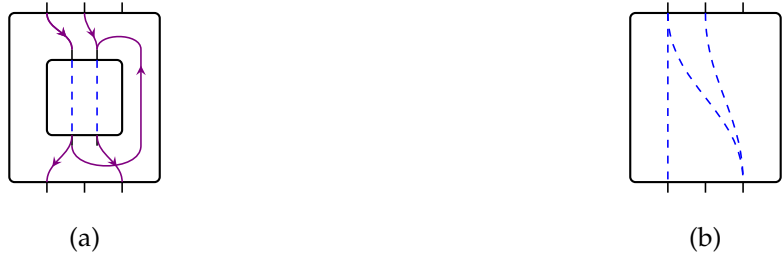

\begin{example}

\Cref{fig:fullfig} depicts the action of a directed wiring diagram $f \colon \binom{2}{2} \to \binom{3}{3}$ on a dependency  $d_X \in \dep\binom{2}{2}$.
Recall that the relation $\dep(f)(d_X)$ is induced by the span in Equation~\ref{eq:R(f)}. By inspection of the source and target maps, an element of this span is a tuple consisting of an input wire, a list alternating between dependencies and trace wires, and an output wire. Adjacent elements of this tuple must have matching sources and targets. 
The three dependencies depicted in \Cref{fig:fullfig}(b) from left to right consist of: 
\begin{itemize}
    \item The first input wire, the first dependency, and the first output wire.
    \item The first input wire, the first dependency, the single trace wire, the second dependency, and the second output wire.
    \item The second input wire, the second dependency, and the second output wire.
\end{itemize}
In this example, each element of the span defined a unique dependency. In general, this may not be the case.

\end{example}

Moreover, because pullbacks distribute over coproduct in the category of finite sets, we have the following lemma.

\begin{lemma}\label{lem:isospans}
    For morphisms $f \colon  \binomx \to \binomy$ and $f' \colon \binom{\Xin'}{\Xout'} \to \binom{\Yin'}{\Yout'}$ in $\DWD$ the spans
\[
    X + X' \leftarrow \Path((f+ f')(\W) +_{X + X'} (d_X + d_{X'}) )  \rightarrow X + X'
\]
and
\[
    X + X' \leftarrow \Path(f(\W) +_X d_X) + \Path(f'(\W) +_{X'} d_{X'}) \rightarrow X + X'
\] 
are isomorphic.
\end{lemma}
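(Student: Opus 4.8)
The plan is to peel the claim apart into two routine facts. The first is that the monoidal product in $\DWD$ is the coproduct of finite sets, so that the graph $(f + f')(\W) +_{X + X'} (d_X + d_{X'})$ is literally the disjoint union of the two graphs $f(\W) +_X d_X$ and $f'(\W) +_{X'} d_{X'}$. The second is that the $\Path$ construction carries a coproduct of graphs to the coproduct of their path graphs, compatibly with the source and target maps — and it is here that ``pullbacks distribute over coproducts in $\Finset$'' does the work.

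For the first fact, I would unwind the definitions: $(f + f')(\W) = f(\W) + f'(\W)$ and $d_X + d_{X'}$ is by construction the coproduct relation, so the graph in question has vertex set $X + X'$ and edge set $f(\W) + f'(\W) + d_X + d_{X'}$. Since the source and target maps of $f(\W)$ and of $d_X$ factor through $X \hookrightarrow X + X'$, while those of $f'(\W)$ and of $d_{X'}$ factor through $X' \hookrightarrow X + X'$, this graph is isomorphic \emph{over} $X + X'$ to the coproduct of graphs $(f(\W) +_X d_X) + (f'(\W) +_{X'} d_{X'})$, the first summand sitting over $X$ and the second over $X'$. For the second fact, a path in a coproduct of graphs $G_1 + G_2$ is a composable sequence of edges; because every edge of $G_i$ has both endpoints in the $i$-th vertex summand and the two vertex summands are disjoint in $V_1 + V_2$, consecutive edges of a path must lie in the same $G_i$, so the whole path lies in $G_1$ or in $G_2$. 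This yields a bijection $\Path(G_1 + G_2)(E) \cong \Path(G_1)(E) + \Path(G_2)(E)$ that respects the source and target maps into $V_1 + V_2$. Equivalently, presenting the length-$n$ paths of $G = (V,E)$ as the iterated fiber product $E \times_V \cdots \times_V E$ ($n$ copies), the mixed terms $E_1 \times_{V_1 + V_2} E_2$ are empty, so distributivity of pullback over coproduct gives $E^{\times_V n} \cong E_1^{\times_{V_1} n} + E_2^{\times_{V_2} n}$, and summing over all path lengths $n$ recovers the same bijection; this is the precise sense in which the lemma follows from the distributivity remark.

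Finally I would compose the graph isomorphism from the first fact with the coproduct-preservation isomorphism from the second, obtaining a bijection between the apexes of the two spans, and then check that it commutes with both legs into $X + X'$. I do not expect a genuine obstacle; the only point demanding care — and the closest thing to one — is this last bookkeeping step, namely verifying that the composite bijection really is a morphism of spans, i.e.\ that the first and last vertices of each path get sent to the expected element of $X + X'$ under every intermediate reindexing. Everything else is immediate from the coproduct description of the monoidal product in $\DWD$ together with pullbacks distributing over coproducts in $\Finset$.
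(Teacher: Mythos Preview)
Your proposal is correct and follows exactly the route the paper indicates: the paper does not give a detailed proof but simply records that the lemma holds ``because pullbacks distribute over coproduct in the category of finite sets,'' and your two facts unwind precisely why that distributivity yields the claimed span isomorphism. Your explicit identification of $\Path$ preserving coproducts of graphs via the vanishing of mixed fiber-product terms is the content the paper leaves implicit.
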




The Grothendieck construction of $R$ is the category $\int R$ whose:
\begin{itemize}
    \item Objects are pairs $\left( \binomx, d_X\right)$, where $\binomx \in \mathsf{Ob}(\DWD)$ and $d_X$ is a dependency on $\binomx$.
    \item Morphisms $f \colon \left(\binomx, d_X\right) \to \left(\binomy, d_Y\right)$ are directed wiring diagrams $f \colon \binomx \to \binomy$  such that  $\dep(f)(d_X) \leq d_Y$.
\end{itemize}

Since $\dep \colon \DWD \to \Poset$ is symmetric lax monoidal, the category $\int \dep$ inherits a symmetric monoidal structure~\cite{grothendieck}. Furthermore, since $\DWD$ is cocartesian, this monoidal structure is unambiguous~\cite[Theorem 4.2]{grothendieck}. It is straightforward to check that this monoidal structure is in fact the coproduct.

As we saw in Example~\ref{example:loop-example}, cycles between the trace wires of $f$ and the dependencies in $\binomx$ are not compatible with composing the dynamics of systems where the output may instantaneously depend on the input.

\begin{definition}\label{definition:cycle}
Let  $f \colon \left(\binomx, d_X\right) \to \left(\binomy, d_Y\right)$ be a morphism in $\int \dep$. As before let $X = \Xin + \Xout$ and $f(W) +_X d_X$ be the graph whose vertices are $X$ and whose edges are $f(W) + d_X $. Then we say $f$ is \defined{(a)cyclic} if the graph $f(W) +_X d_X\rightrightarrows X$  is (a)cyclic. 
\end{definition}

\begin{restatable}[]{lemma}{compositionofacyclic}\label{lem:composition-of-acyclic}

The composition of acyclic morphisms in $\int \dep$ is acyclic.

\end{restatable}

By Lemma~\ref{lem:composition-of-acyclic} and the fact that identity morphisms are acyclic, the acyclic morphisms of $\int \dep$ form a wide subcategory, which we call the category of dependent directed wiring diagrams, denoted $\dDWD$.


By Lemma~\ref{lem:isospans} and the fact that the coproduct of two acyclic graphs is again acyclic, the monoidal product of two acyclic morphisms is again acyclic. Furthermore, the associator, unitors, and braiding are acyclic as well. Therefore, $\dDWD$ inherits a symmetric monoidal structure from $\int \dep$.

\section{Composing Mealy machines}\label{section:dynamsection}

Here, we  define an algebra $\Dynam \colon \dDWD\to \Set$ for composing real-valued Mealy machines. 

\begin{definition}
    Given an object $\left(\binomx, d_X\right)$ in $\dDWD$, define $\Dynam\left(\binomx, d_X\right)$  to be the set of triples 
    \[
        (S \colon \Finset, u \colon \R^{\Xin} \times \R^S \to \R^S, r \colon \R^{\Xin} \times \R^S \to \R^{\Xout}),
    \] where $S$ is a finite set of states, $u$ is an update map, and $r$ is a readout map that respects the relation $\Xin \xfrom{s} d_X \xto{t} \Xout$. One such triple is called a \defined{Mealy machine}.
\end{definition}



Let $f \colon \left(\binomx, d_X\right)\to \left(\binomy, d_Y\right)$ be a morphism in $\dDWD$ and let $r$ be the readout of a Mealy machine in $\Dynam\left(\binomx, d_X \right)$. Given a state $s \in \R^S$ and $a \in \R^\Xin$, we want to determine values for both inputs and outputs that remain consistent when signal flows --- via the readout and trace wires ---  through the system. We do this by computing the fixed point of an endomorphism that captures these signal flows.

Define an endomorphism $\phi^{f,r}(a, s) \colon \R^\Xin \times \R^\Xout \to \R^\Xin \times \R^\Xout$  by 
\begin{equation}\label{eq:xin-xout-endomorphism}
    \phi^{f,r}(a,s)(\xin, \xout) \coloneqq (f(\W)^*(\xout) + a, r(\xin, s)).
\end{equation} 

Because $f$ is acyclic, $\phi^{f,r}(a,s)$ satisfies the hypotheses of Lemma~\ref{lemma:first} and hence has a unique fixed point $(\oxin, \oxout)$.

\begin{definition}\label{def:oxout-f-r}
    For $f \colon \left(\binomx, d_X\right)\to \left(\binomy, d_Y\right)$ in $\dDWD$ and $(S, u, r) \in \Dynam\left(\binomx, d_X\right)$, define 
    \[
        \ophi^{f,r} = (\ophiin^{f,r}, \ophiout^{f,r}) \colon \R^{\Xin} \times \R^S \to \R^\Xin \times \R^\Xout
    \]
    such that $\ophi^{f,r}(a,s)$ is the unique fixed point of $\phi^{f,r}(a,s)$.
\end{definition}

\begin{example}
    
Consider the morphism $f\colon \left(\binom{2}{2}, d_X\right) \to \left(\binom{2}{1}, d_Y\right)$ pictured below where $d_X$ is the identity span $2 \leftarrow 2 \to 2$ and $d_Y$ is the span $2 \leftarrow 2 \to 1$ where the left leg is the identity.

Consider the  triple \(
    (0, ! \colon \R^2 \times \R^0 \to \R^0, r \colon \R^2 \times \R^0 \to \R^2).
\)
The readout map $r$ respects the relation $d_X$ if and only if there exists maps $r_1, r_2 \colon \R \to \R$ such that 
\(
    r((a_1, a_2), *) = (r_1(a_1), r_2(a_2))   
\).
In such cases, $(0, !, r)$ is a Mealy machine in $\Dynam\left(\binom{2}{2}, d_X\right)$.

Given an input $a = (a_1, a_2) \in \R^2$, the fixed point of $\phi^{f,r}(a,*)$ consists of an input
\(
    \ophiin^{f,r}(a,*) = (a_1, r_1(a_1) + a_2) \in \R^2
\) and an output
\(
    \ophiout^{f,r}(a, *) = (r_1(a_1), r_2(r_1(a_1) + a_2)) \in \R^2
\).
The diagram below show how this fixed point propagates the input through readout and feedback wires. The purple arrows indicate the flow of values, with the output $r_1(a_1)$ feeding back into the second input as part of $r_1(a_1) + a_2$.
\begin{center}
     \begin{tikzpicture}[oriented WD, bbx = .5cm, bby =0.25cm, bb port length=4pt, bb port sep=.75, bb min width=1cm, bb min height=1cm]
    \node[bb={0}{0}{2}{2}] (A){};
    
    \node[bb={0}{0}{2}{1}, fit={($(A.north west)+(0,2)$)($(A.south east)+(0, -2)$)}] (tot){};
    
    \draw[ar, color=violet] (tot_top1') to [out=-90, in=90] (A_top1);
    \draw[ar, color=violet] (tot_top2') to [out=-90, in=90] (A_top2);
    \draw[ar, color=violet] (tot_top1') to [out=-90, in=90] (A_top1);  
    \draw[color=blue, dashed] (A_top1') to [out=-90, in=90] (A_bot1);
    \draw[color=blue, dashed] (A_top2') to [out=-90, in=90] (A_bot2);
    \draw[ar, color=violet] (A_bot2) to [out=-90, in=90] (tot_bot1);
    \draw[ar, color=violet] let \p1=(A.south east), \p2=(A.north east), \n1=\bbportlen, \n2=\bby in
     (A_bot1') to [out=-90, in=-90] (\x1 + \bby, \y1 - \n1) to [out=90, in=-90] (\x2 + \bby, \y2 +\n1) to [out=90, in=90] (A_top2);
  
    \draw[label]
      node [left = 2pt of A_top1] {$a_1$}
      node [left = 2pt of A_bot1, yshift = -1ex] {$r_1(a_1)$}
    ;
    \draw[label]
      node [right = 2pt of A_top2] {$r_1(a_1) + a_2$}
      node [right = 2pt of tot_bot1, yshift = -1ex] {$r_2(r_1(a_1)+a_2)$}
    ;
    
    \draw[label]
      node [above = 2pt of tot_top1] {$a_1$}
      node [above = 2pt of tot_top2] {$a_2$}
    ;
\end{tikzpicture}

\end{center}

\end{example}



Let  $f \colon \left(\binomx, d_X\right) \to \left(\binomy, d_Y\right)$ be a morphism in $\dDWD$ and let $(S, u, r) \in \Mealy\left(\binomx, d_X\right)$. Define $\Mealy(f)(u)$ to be the composite
\[
    \Mealy(f)(u) \colon \R^{\Yin} \times \R^S \xrightarrow{f(\Win)^* \times \triangle} \R^{\Xin} \times \R^S \times \R^S \xrightarrow{\ophiin^{f,r} \times \R^S} \R^{\Xin} \times \R^S \xrightarrow{u} \R^S
\]
and define $\Mealy(f)(r)$ to be  the composite
\[
    \Mealy(f)(r) \colon \R^{\Yin} \times \R^S \xrightarrow{f(\Win)^* \times \R^S} \R^{\Xin} \times \R^S \xrightarrow{\ophiout^{f,r}} \R^{\Xout} \xrightarrow{f(\Wout)^*} \R^{\Yout}.
\]
Then define 

$$\Mealy(f) \colon \Mealy\left(\binomx, d_X\right) \to \Mealy\left(\binomy, d_Y\right)$$   

$$(S, u, r) \mapsto (S, \Mealy(f)(u), \Mealy(f)(r)).$$





\begin{restatable}[]{theorem}{mealylaxmonoidal}

$\Dynam \colon \left(\dDWD, + , \left(\binom{0}{0}, ! \right) \right) \to (\Set, \times, 1)$ is a lax monoidal functor.
\end{restatable}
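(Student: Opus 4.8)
The plan is to verify the three ingredients of a lax monoidal functor separately: functoriality of $\Dynam$, the existence of coherent laxator maps $\Dynam(A) \times \Dynam(B) \to \Dynam(A \oplus B)$, and the compatibility of these laxators with the associators, unitors, and braiding. Throughout, the real content is the functoriality axiom $\Dynam(g \circ f) = \Dynam(g) \circ \Dynam(f)$, since the laxator and coherence checks reduce to the corresponding (already known) facts for $\DWD$ together with bookkeeping about fixed points over coproducts of graphs.

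First I would check that $\Dynam$ is well-defined on morphisms: given $f \colon (\binomx, d_X) \to (\binomy, d_Y)$ in $\dDWD$ and $(S,u,r) \in \Dynam(\binomx, d_X)$, I must show that $\Mealy(f)(r) = f(\Wout)^* \circ r \circ (f(\Win)^* \times \R^S)$ respects the relation $d_Y$. This follows from Lemma~\ref{lem:composite-of-respects-respects} and Lemma~\ref{lem:upper-star-respects}: $f(\Win)^*$ respects the graph of the span induced by $f(\Win)$, $r$ respects $d_X$ by hypothesis, and $f(\Wout)^*$ respects $f(\Wout)$; composing these spans and noting that the resulting relation is contained in the relation $\dep(f)(d_X)$ of Equation~\ref{eq:R(f)} --- which in turn is $\leq d_Y$ since $f$ is a morphism in $\int \dep$ --- gives the claim. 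Hence $\Mealy(f)$ lands in $\Dynam(\binomy,d_Y)$.

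Next, functoriality. Preservation of identities is immediate: for $f = \id$ the graph $f(\W) +_X d_X$ has no trace wires, so the endomorphism $\phi^{\id,r}(a,s)$ has fixed point $\ophiin = a$, $\ophiout = r(a,s)$, and the formulas for $\Mealy(\id)(u)$, $\Mealy(\id)(r)$ collapse to $u$ and $r$. For composition, I would unfold $\Mealy(g \circ f)$ and $\Mealy(g) \circ \Mealy(f)$ and compare. The key lemma to extract and prove is that the fixed point $\ophi^{g \circ f, r}$ factors through $\ophi^{f,r}$ and $\ophi^{g, \Mealy(f)(r)}$: concretely, the composite graph $(g\circ f)(\W) +_{X} d_X$ whose edges come from $f(\W)$, $f(\Wout)\times_\Yout g(\W)\times_\Yin f(\Win)$, and $d_X$ can be analyzed by first solving the fixed point over the ``inner'' wires $f(\W) +_X d_X$ (producing $\oxin, \oxout$ as functions of the outer inputs, by Lemma~\ref{lemma:first}) and then over the ``outer'' trace wires $g(\W)$; uniqueness of fixed points (again Lemma~\ref{lemma:first}, valid because $g\circ f$ is acyclic by Lemma~\ref{lem:composition-of-acyclic}) forces the two-stage solution to agree with the one-stage one. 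Granting this, the update and readout composites match by the same span-chasing used in~\cite{DWD} for Moore machines, now carrying the extra $\ophiin$ factor through. This staged fixed-point decomposition is the main obstacle; I expect it to be provable by induction on a topological order of the composite graph, mirroring the inductive construction sketched before Lemma~\ref{lemma:fpancestors}.

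Finally, the monoidal data. The laxator $\Dynam(\binomx,d_X) \times \Dynam(\binomy,d_Y) \to \Dynam(\binomx \oplus \binomy, d_X + d_Y)$ sends a pair of Mealy machines $(S,u,r)$, $(S',u',r')$ to $(S \times S', u \times u', r \times r')$; this readout respects $d_X + d_Y$ because a product of relation-respecting maps respects the disjoint-union relation. Naturality of the laxator in both arguments reduces to the fact that, over a coproduct interface with no cross wires, the fixed-point endomorphism $\phi$ splits as a product $\phi^{f,r} \times \phi^{g,r'}$, so $\ophi$ splits likewise --- this is where I would invoke that the coproduct of acyclic graphs is acyclic (as cited for $\dDWD$'s monoidal structure). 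The associativity, unit, and symmetry coherence diagrams then commute because they already commute for the underlying set-level operations $\times$ on states and the laxator is built from these; each check is a short diagram chase with no fixed-point subtlety. I would state the staged fixed-point decomposition as an explicit lemma before the proof and devote most of the write-up to it, treating the remaining items as routine.
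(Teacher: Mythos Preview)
Your proposal is correct and takes essentially the same approach as the paper: the staged fixed-point decomposition you isolate as the main obstacle is precisely the content of the paper's Lemma~\ref{lem:theorem-part1}, and the remaining checks (identity, laxator, coherence) proceed as you outline. Two small remarks: the paper proves the staged decomposition by writing down a coupled system in $(\oxin,\oxout,\oyin,\oyout)$ and verifying directly, via uniqueness of fixed points, that its solution simultaneously realizes $\ophi^{g,\Mealy(f)(r)}$ and $\ophi^{g\circ f,r}$ --- no induction on a topological order is needed --- and the laxator's state component should be the finite set $S+S'$ rather than $S\times S'$.
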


\section{Stock-flow diagrams}\label{section:SFsection}

A stock-flow diagram graphically models a system in which flows transform one stock into another. For example, in the simple stock-flow diagram below there are three stocks representing a susceptible, infective, and recovered population. Three flows transform susceptible people into infective, infective into recovered, and recovered back into susceptible. Two one-sided flows represents the birth of susceptible individuals and the death of recovered individuals.

\begin{figure}[thb]
    \centering
        \centering
        \includegraphics[width=0.65\linewidth,scale=1.5]{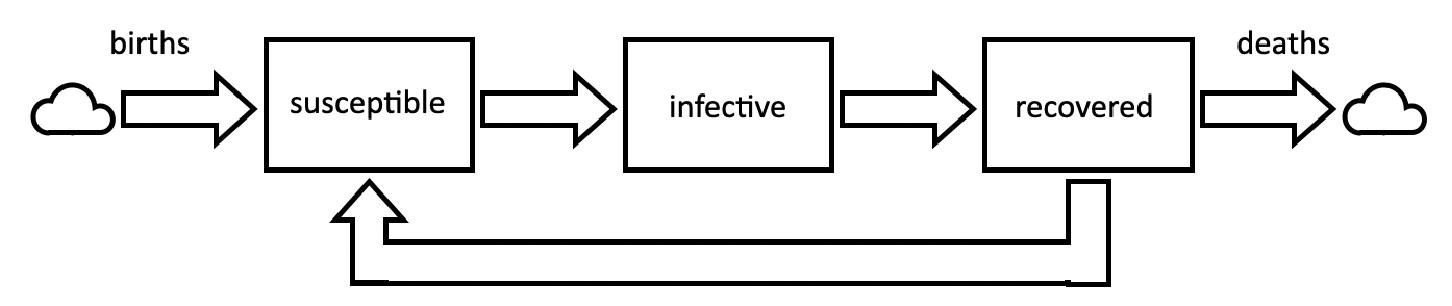}
\end{figure}


\subsection{Composing stock-flow diagrams}
In this subsection, we construct an operad algebra $\SF \colon \mathcal{O}(\dDWD) \to \mathcal{O}(\Set)$ for composing  stock-flow diagrams. We focus on full-fledged, open stock-flow diagrams, which consist of stocks, flows, sum variables, (auxiliary) variables,  in ports, and out ports. 
Each flow is assigned a variable that defines its rate. Each variable is linked to stocks, sum variables, variables, and in ports. The value of each variable is given by the auxiliary function and may only depend on the components to which it is linked. Finally, each out port outputs the sums of the variables to which it is linked. This setup is captured by the following definition.

\begin{definition}\label{definition:opensf}

An open stock-flow diagram with interface $\left(\binomx, d_X \right) \in \ob(\dDWD)$ consists of:

\begin{itemize}
    \item  A functor $F \colon \Hf \to \Set$ where $\Hf$ is the free category of the diagram 
\[\begin{tikzcd}[row sep = small]
	&& \stocksumlink & \sumvar \\
	& \stock & \stocklink & \sumlink \\
	\inflow && \outflow & \aux & \inlink & \inport \\
	& \flow && \outlink & \varlink \\
	&&& \outport
	\arrow["t", from=1-3, to=1-4]
	\arrow["s"', from=1-3, to=2-2]
	\arrow["s"', from=2-3, to=2-2]
	\arrow["t"', from=2-3, to=3-4]
	\arrow["s"', from=2-4, to=1-4]
	\arrow["t"', from=2-4, to=3-4]
	\arrow["{\text{is}}", from=3-1, to=2-2]
	\arrow["{\text{if}}"', from=3-1, to=4-2]
	\arrow["{\text{os}}"', from=3-3, to=2-2]
	\arrow["{\text{of}}", from=3-3, to=4-2]
	\arrow["t"', from=3-5, to=3-4]
	\arrow["s", from=3-5, to=3-6]
	\arrow["{\text{fv}}"', curve={height=18pt}, from=4-2, to=3-4]
	\arrow["s", from=4-4, to=3-4]
	\arrow["t"', from=4-4, to=5-4]
	\arrow["t", shift left, from=4-5, to=3-4]
	\arrow["s"', shift right, from=4-5, to=3-4]
\end{tikzcd}\]

such that (1) $F(\text{if})$ and $F(\text{of})$ are injective, (2) the graph whose edges are $F(\varlink)$ and whose vertices are $F(\aux)$ is acyclic,  (3) $F(\inport) = \Xin$ and $F(\outport) = \Xout$, and (4) the relation $d_X$ contains the relation induced by the span
   \[
        \Xin = F(\inport) \leftarrow F(\inlink) \times_{F(\aux)} \Path(F(\varlink)) \times_{F(\aux)} F(\outlink) \to F(\outport) = \Xout
   \] where $\Path(F(\varlink))$ is the path graph of the graph $F(\varlink) \rightrightarrows F(\aux)$.

    \item An auxiliary function $\varphi \colon  \Rst\times\Rsv\times\Rv\times\R^{F(\inport)}\to\Rv$ such that $\varphi$ respects the span
    
        \begin{equation}\label{eq:sf-respect-span}
    \begin{tikzcd}
	& {} \\
	{F(\stock) + F(\sumvar) + F(\aux) + F(\inport)} \\
	{F(\stocklink) + F(\sumlink) + F(\varlink) + F(\inlink)} \\
	{F(\aux)}
	\arrow["s", from=3-1, to=2-1]
	\arrow["t"', from=3-1, to=4-1]
\end{tikzcd}
\end{equation}
    
\end{itemize}
    
\end{definition}


\begin{example}\label{ex:initial-sir-example}

Below is an example of an open stock-flow diagram representing an SIR model, inspired by Figure 1 of~\cite{sfcomp}.
\begin{center}
    \includegraphics[width=0.65\linewidth]{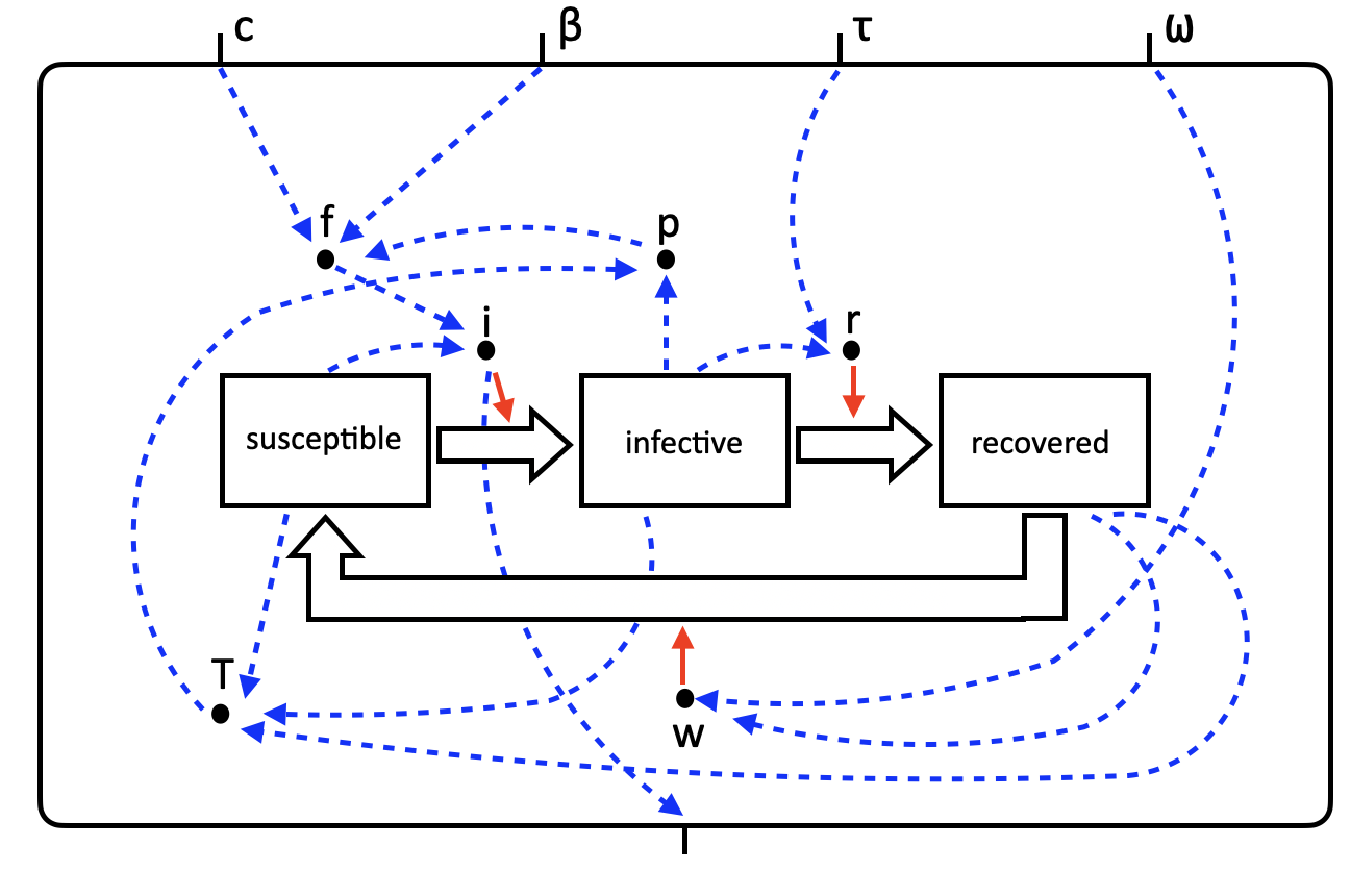}
\end{center}

This open stock-flow diagram has four in ports corresponding to contacts per unit time ($c$), per contact likelihood of infection ($\beta$), mean duration of infectiousness ($\tau$), and rate of waning of immunity ($\omega$).
It has five variables corresponding to force of infection ($f$), the fractional prevalence of infection ($p$),  infection ($i$),  recovery ($r$), and waning immunity ($w$).
It has one sum variable corresponding to the total population ($T$). 
And it has one out port which outputs the infection.

Links (shown in blue) depict the dependencies between the variables and other components. For example, referring back to \Cref{definition:opensf}, one can observe a variable link from $f$ to $i$, an input link from $\tau$ to $r$, and a stock link from infective to $p$. The assignment of each flow to the variable that defines its rate is shown in red.
To define the auxiliary function it suffices to give for each variable, a function of the components to which it is linked, which we do as follows: 
(1) The force of infection is the product  of the contact per unit time, per contact likelihood, and fractional prevalence. 
(2) The fractional prevalence is the infective population divided by the total population. 
(3) The infection is the product of the susceptible population with the force of infection. 
(4) The recovery is the product of the infective population with the mean duration of infectiousness. 
(5) The  waning of immunity is the product of the recovered population with the  waning immunity. In summary,
\[
f = c \beta p, \quad p = \frac{I}{T}, \quad i = Sf, \quad r = I\tau, \quad w = R\omega.
\]
Note that we do not give a function for the sum variable representing the total population, because it will  be computed  as the sum of the stocks to which it is linked.

The interface for this stock-flow diagram tracks that the single output depends on the first two inputs. 

\end{example}





Let $f \colon \left(\binomx, d_X \right) \to \left(\binomy, d_Y \right)$ in $\dDWD$, and let $(F, \varphi)$ be an open stock-flow diagram with  interface $\left(\binomx, d_X \right)$. Then define $\SF(f)(F) \colon \Hf \to \Set$ to be the diagram which is identical to $F$ on all objects and morphisms except for the following links and their source and target maps.
\begin{align*}
    \SF(f)(F)(\inlink) & = f(\Win) \times_\Xin F(\inlink)\\
    \SF(f)(F)(\outlink)  & = F(\outlink) \times_\Xout f(\Wout)\\
    \SF(f)(F)(\varlink) & = F(\varlink) + \left(F(\outlink) \times_\Xout f(W) \times_{\Xin}  F(\inlink)\right).
\end{align*}
We also define $\SF(f)(\varphi)  \colon \Rst\times\Rsv\times\Rv\times\R^{\Yin}\to\Rv$ to be the function that takes  $(x_\stock, x_\sumvar, x_\aux, \yin) \in \Rst\times\Rsv\times\Rv\times\R^{\Yin}$ to 
\[
      \varphi\left(x_\stock, x_\sumvar, x_\aux, f(\Win)^*(\yin) + (f(W)^* \circ  F(\outlink)^*)(x_\aux)\right).
\]

\begin{restatable}[]{proposition}{SFlaxmonoidal}
    There is a lax monoidal functor  $\SF \colon \left(\dDWD, + \left(\binom{0}{0}, !\right) \right) \to (\Set, \times, 1)$ which on objects takes $\left(\binomx, d_X \right)$ to the set of all open stock-flow diagrams with this interface. On morphisms $f$, it takes the open stock-flow diagram $(F, \varphi)$ to the open stock-flow diagram $(\SF(f)(F), \SF(f)(\varphi))$.
\end{restatable}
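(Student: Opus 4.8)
The plan is to verify, in turn, that $\SF$ is well-defined on objects (immediate from Definition~\ref{definition:opensf}), that $\SF(f)$ sends an open stock-flow diagram with interface $\left(\binomx, d_X\right)$ to one with interface $\left(\binomy, d_Y\right)$, that $\SF$ is functorial, and that it carries a lax monoidal structure. The first thing to do is to make explicit the source and target maps that the definition of $\SF(f)(F)$ leaves implicit: a composite in link $(w,\ell) \in f(\Win) \times_\Xin F(\inlink)$ has source $s(w) \in \Yin$ and target $t(\ell) \in F(\aux)$; a composite out link $(\ell, w) \in F(\outlink) \times_\Xout f(\Wout)$ has source $s(\ell) \in F(\aux)$ and target $t(w) \in \Yout$; an old variable link keeps its source and target; and a composite variable link $(\ell_o, w, \ell_i) \in F(\outlink) \times_\Xout f(W) \times_\Xin F(\inlink)$ has source $s(\ell_o) \in F(\aux)$ and target $t(\ell_i) \in F(\aux)$. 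All other data of $F$ --- in particular $F(\stocklink)$, $F(\sumlink)$, $F(\stocksumlink)$, and the flow structure --- is copied verbatim, with $\inport$ and $\outport$ reinterpreted as $\Yin$ and $\Yout$. Conditions (1) and (3) of Definition~\ref{definition:opensf} are then immediate, since $F(\text{if})$ and $F(\text{of})$ are untouched and $\SF(f)(F)(\inport) = \Yin$, $\SF(f)(F)(\outport) = \Yout$ by construction.

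The heart of the argument is condition (2): the graph $G'$ with vertex set $F(\aux)$ and edge set $\SF(f)(F)(\varlink) = F(\varlink) + \bigl(F(\outlink) \times_\Xout f(W) \times_\Xin F(\inlink)\bigr)$ must be acyclic. I would prove this by contradiction. Assemble the auxiliary graph $H$ with vertex set $F(\aux) + \Xin + \Xout$ whose edges are the original variable links $F(\varlink)$ (variable to variable), the in links $F(\inlink)$ (in port to variable), the out links $F(\outlink)$ (variable to out port), and the trace wires $f(W)$ (out port to in port). A composite variable link $(\ell_o, w, \ell_i)$ unfolds into the length-three walk $s(\ell_o) \to t(\ell_o) \xrightarrow{\,w\,} t(w) \to t(\ell_i)$ in $H$, so a cycle in $G'$ yields a closed walk in $H$. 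By condition (2) for $(F, \varphi)$ the original variable-link graph is acyclic, so this closed walk cannot remain inside $F(\aux)$ and therefore traverses at least one trace wire; consequently, reading the cycle from any trace wire, it breaks into segments of the shape [in link][path of variable links][out link] separated by trace wires. By condition (4) for $(F, \varphi)$ each such segment joins an in port to an out port along an edge of $d_X$, and interleaving these edges with the trace wires used exhibits a cycle in the graph $f(W) +_X d_X$ on vertex set $X = \Xin + \Xout$. This contradicts the acyclicity of $f$ (Definition~\ref{definition:cycle}), so $G'$ is acyclic.

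Condition (4) is obtained by the same unfolding. An element of the span $\Yin \leftarrow \SF(f)(F)(\inlink) \times_{F(\aux)} \Path(\SF(f)(F)(\varlink)) \times_{F(\aux)} \SF(f)(F)(\outlink) \to \Yout$ is an input wire, a path of composite variable links, and an output wire; expanding each composite variable link as above rewrites it as an input wire, an alternating list of original internal-dependency paths and trace wires, and an output wire. By condition (4) for $(F, \varphi)$ each internal-dependency path induces an edge of $d_X$, so the element determines an element of $f(\Win) \times_X \Path(f(W) +_X d_X) \times_X f(\Wout)$, and the relation it induces on $\Yin \times \Yout$ therefore lies in $\dep(f)(d_X) \leq d_Y$, using that $f$ is a morphism of $\dDWD$. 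Finally, that $\SF(f)(\varphi)$ respects the span of Definition~\ref{definition:opensf} attached to $\SF(f)(F)$ follows from Lemma~\ref{lem:composite-of-respects-respects} and Lemma~\ref{lem:upper-star-respects}: $\SF(f)(\varphi)$ is $\varphi$ precomposed with the identity on its stock, sum-variable, and variable arguments and, on its in-port argument, with $f(\Win)^*$ together with the feedback term $f(W)^* \circ F(\outlink)^*$, each factor of which respects its defining span --- the same manipulation used for the Moore-machine functor of~\cite{DWD} and for $\Dynam$.

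Functoriality and lax monoidality are routine. For $f = \id$ the fiber products defining $\SF(f)(F)$ and $\SF(f)(\varphi)$ collapse, giving $\SF(\id) = \id$. For a composite $g \circ f$, substituting the $\DWD$ composition formulas for $(g\circ f)(\Win)$, $(g\circ f)(W)$, and $(g\circ f)(\Wout)$ into the definition of $\SF(g\circ f)$ and rearranging iterated fiber products and coproducts recovers $\SF(g)\circ\SF(f)$; the acyclicity hypotheses play no role here and the computation parallels the functoriality proofs for the Moore-machine algebra and for $\Dynam$. The laxator $\SF\left(\binomx, d_X\right) \times \SF\left(\binomy, d_Y\right) \to \SF\left(\left(\binomx, d_X\right) + \left(\binomy, d_Y\right)\right)$ is the disjoint union of open stock-flow diagrams, and the unit map picks out the empty diagram over $\left(\binom{0}{0}, !\right)$; associativity, unitality, and naturality in morphisms hold because fiber products distribute over coproducts (Lemma~\ref{lem:isospans}) and coproducts of acyclic graphs are acyclic, exactly as for $\int \dep$ and for $\Dynam$. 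The main obstacle is condition (2): it is the only step that genuinely uses the acyclicity of $f$, and it is where care is needed to match cycles in the composite variable-link graph with cycles in $f(W) +_X d_X$ by way of the original diagram's internal dependency relation.
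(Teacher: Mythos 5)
Your proposal is correct, and on the parts the paper's proof actually covers it follows the same route: the verification that $\SF(f)(\varphi)$ respects the span attached to $\SF(f)(F)$ via Lemma~\ref{lem:upper-star-respects} and Lemma~\ref{lem:composite-of-respects-respects}, functoriality by substituting the $\DWD$ composition formulas and rearranging fiber products (the paper spells this out, computing the three spans for $\inlink$, $\outlink$, $\varlink$ of $\SF(g)(\SF(f)(F))$ and the explicit in-port argument $(f(\Win)^*\circ g(\Win)^*)(\zin) + (f(\Win)^*\circ g(\W)^*\circ f(\Wout)^*\circ F(\outlink)^*)(x_\aux) + (f(\W)^*\circ F(\outlink)^*)(x_\aux)$, whereas you only assert the rearrangement), and the laxator given by disjoint union $(F+F', \varphi\times\varphi')$. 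Where you genuinely diverge is in adding the well-definedness checks that the paper's proof omits entirely: your contradiction argument that a cycle in the new variable-link graph $F(\varlink) + F(\outlink)\times_\Xout f(\W)\times_\Xin F(\inlink)$ unfolds, via the auxiliary graph on $F(\aux)+\Xin+\Xout$, into segments witnessing edges of $d_X$ interleaved with trace wires and hence a cycle in $f(\W)+_X d_X$, contradicting acyclicity of $f$; and the analogous unfolding showing the induced dependency of $\SF(f)(F)$ lands in $\dep(f)(d_X)\leq d_Y$. These are exactly the conditions (2) and (4) of Definition~\ref{definition:opensf} needed for $\SF(f)(F,\varphi)$ to be an open stock-flow diagram on $\left(\binomy, d_Y\right)$ at all, so your version is the more complete one; the one small point worth making explicit is that both your segment decomposition and the condition-(4) unfolding use the convention that $\Path$ contains trivial (length-zero) paths, so that an in link feeding directly into an out link at the same variable is already recorded in $d_X$.
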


\begin{example}

    The stock-flow diagram in \Cref{fig:water-pollutant-sf}(a) has a single stock of water that grows at a constant rate and decreases at a rate proportional to the size of the water supply. This proportion is given by the single parameter to the system. These inflows and outflows are outputted by the system along with the water supply itself.
    The stock-flow diagram in \Cref{fig:water-pollutant-sf}(b) has a single stock of pollutants that grows in proportion to the inflow of its substrate. This proportion is given as a parameter to the system. The total pollutants decreases proportional to the outflow of its substrate. This proportion is given by the average pollutant per unit substrate, and this average is calculated by dividing the total pollutant by the size of the stock of substrate. 

    When we compose these stock-flow diagrams according to the dependent directed wiring diagram pictured in ~\Cref{fig:water-pollutant-ddwd}(a), we get a stock-flow diagram with a stock for the water supply and a stock for the pollutants in which the flow of the pollutants depends on the flow of the water. This composite is an example of the generic co-flow structure defined in Figure 12-17 of \cite{sterman2009businessdynamics}.
    
\begin{figure}[thb]
    \centering
    \begin{subfigure}[b]{0.45\textwidth}
        \centering
        \includegraphics[width=\textwidth]{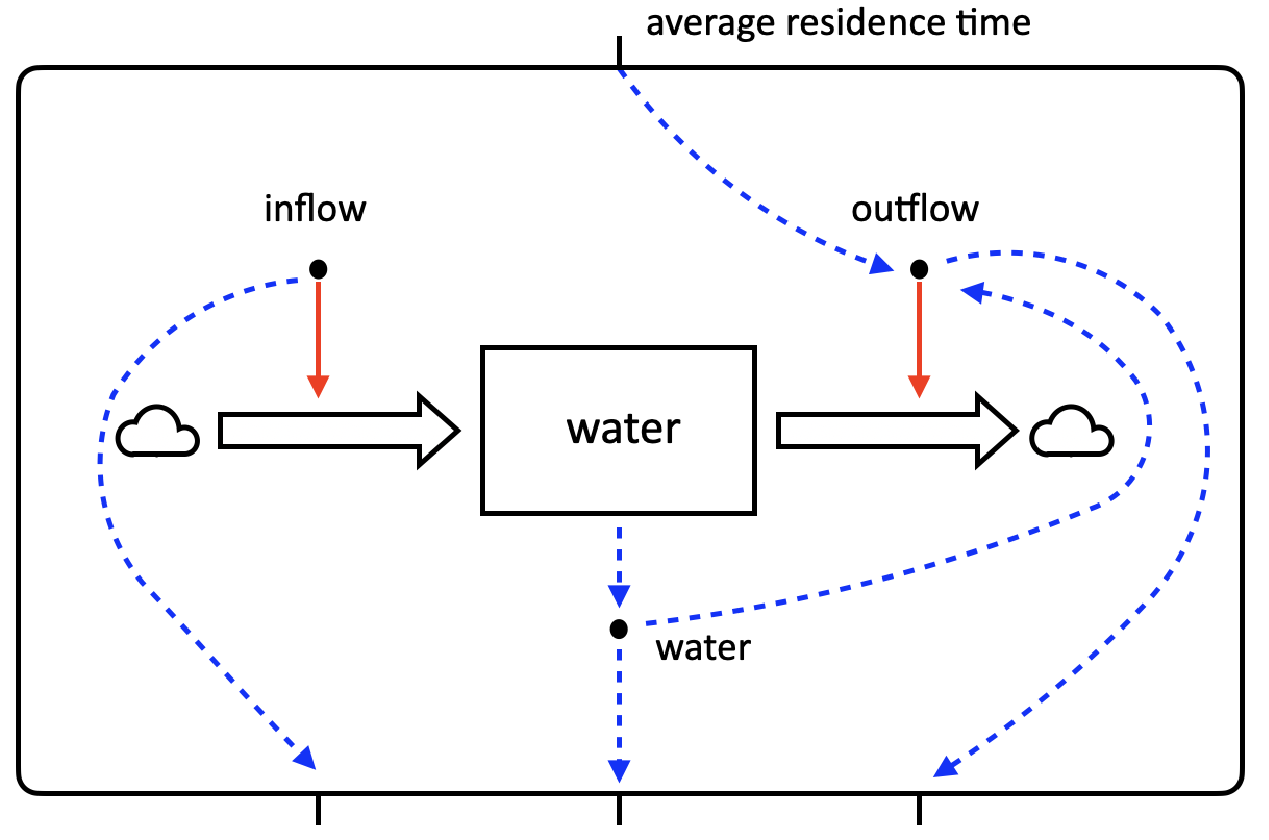}
        \caption{}
    \end{subfigure}
    \hfill
    \begin{subfigure}[b]{0.45\textwidth}
        \centering
        \includegraphics[width=\textwidth]{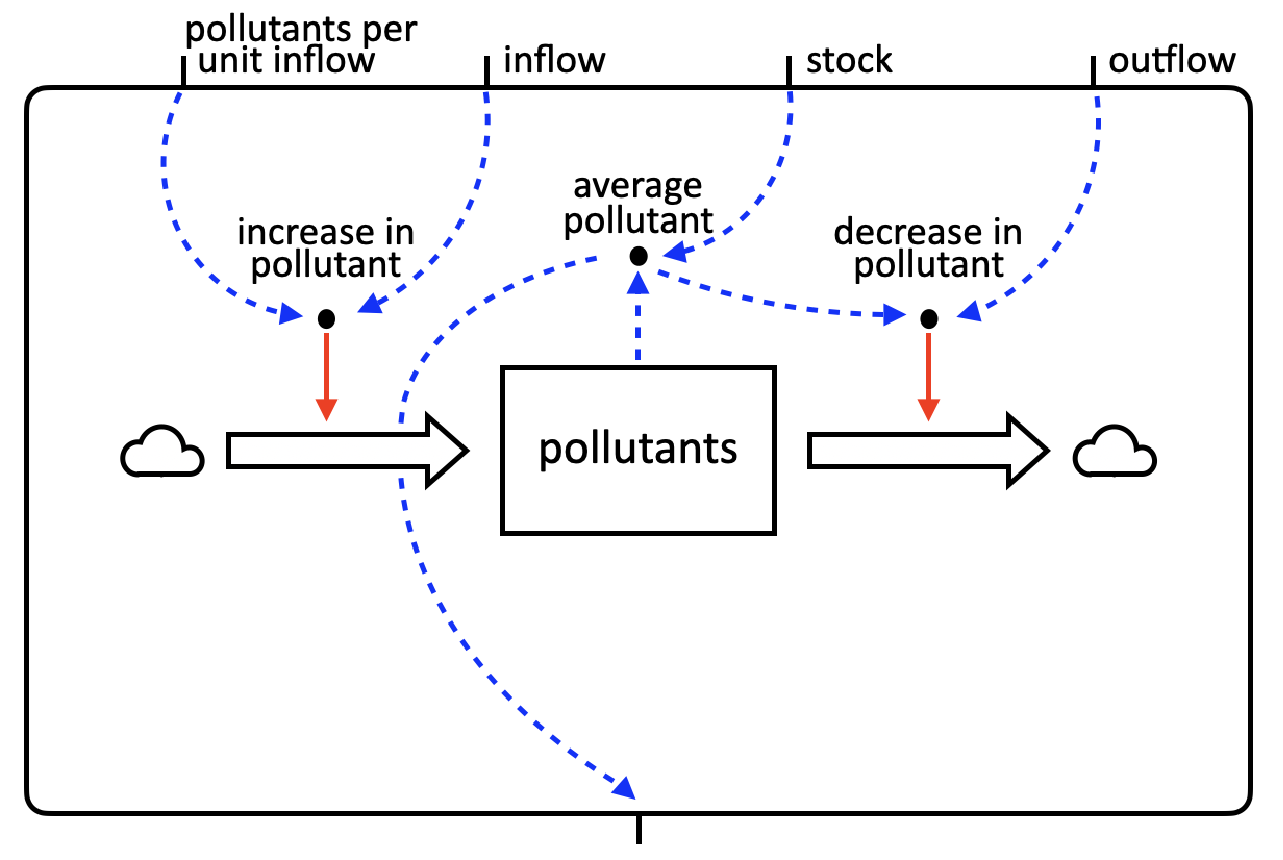}    \end{subfigure}
    \caption{Stock-flow diagrams representing  a changing (a) water supply and (b) concentration of pollutants.}
    \label{fig:water-pollutant-sf}
\end{figure}

    In the compositional approach to modeling,  the syntax for composition (the dependent directed wiring diagram) is independent from the models being composed. For example, we can compose the stock-flow diagrams in \Cref{fig:water-pollutant-sf} according to other composition patterns.  \Cref{fig:water-pollutant-ddwd}(b) defines a composition pattern for two pollutants co-flowing with a single water supply. Conversely, \Cref{fig:water-pollutant-ddwd}(c) defines a composition pattern for a pollutant co-flowing with two independent water supplies.

\begin{figure}[hbt]
   \centering
   \begin{subfigure}[b]{0.3\textwidth}
       \centering
       \begin{tikzpicture}[oriented WD, bbx = .5cm, bby =0.25cm, bb port length=4pt, bb port sep=.75, bb min width=1cm, bb min height=1cm]
    \node[bb={0}{0}{1}{3}] (A){};
    \node[bb={0}{0}{4}{1}, below=of A, yshift=-0.5cm] (B){};
    
    \node[bb={0}{0}{2}{1}, fit={($(A.north east)+(0,2)$)($(A.north west)+(0, 1)$)($(B.south west)+(0, 2)$)($(B.south east)+(0, -1)$)}] (tot){};

 \draw[ar, color=violet] (tot_top2) to [out=-90, in=90] (A_top1);
 
 \draw[ar, color=violet] (A_bot1') to [out=-90, in=90] (B_top2); 

  \draw[ar, color=violet] (A_bot2') to [out=-90, in=90] (B_top3);
  
  \draw[ar, color=violet] (A_bot3') to [out=-90, in=90] (B_top4);

 \draw[ar, color=violet] (B_bot1') to [out=-90, in=90] (tot_bot1);

  \draw[color=blue, dashed] (A_top1') to [out=-90, in=90] (A_bot3);

  \draw[color=blue, dashed] (B_top3') to [out=-90, in=90] (B_bot1);

\draw[ar, color=violet] let \p1=(A.north west), \p2=(A.south west), \n1=\bbportlen in
    (tot_top1') to[out=-90, in=90] (\x1 - \n1, \y1)  to[out=-90, in=90] (\x2 - \n1, \y2) 
    to[out=-90, in=90] (B_top1);

\end{tikzpicture}
       \vspace{-0.4in}
       \caption{}
    \end{subfigure}
   \hfill
    \begin{subfigure}[b]{0.3\textwidth}
       \centering
        \begin{tikzpicture}[oriented WD, bbx = .5cm, bby =0.25cm, bb port length=4pt, bb port sep=.75, bb min width=1cm, bb min height=1cm]
    \node[bb={0}{0}{1}{3}] (A){};
    \node[bb={0}{0}{4}{1}, below =of A, yshift=-0.5cm] (D){};
    \node[bb={0}{0}{4}{1}, left=0.25cm of D] (B){};
    
    \node[bb={0}{0}{3}{2}, fit={($(A.north east)+(0,2)$)($(A.north west)+(0, 1)$)($(B.south west)+(0, 2)$)($(D.south east)+(0, -1)$)}] (tot){};
    
    \draw[ar, color=violet] (tot_top3) to [out=-90, in=90] (A_top1);
    
    \draw[ar, color=violet] (A_bot1') to [out=-90, in=90] (B_top2); 
    \draw[ar, color=violet] (A_bot2') to [out=-90, in=90] (B_top3);
    \draw[ar, color=violet] (A_bot3') to [out=-90, in=90] (B_top4);
    
    \draw[ar, color=violet] (A_bot1') to [out=-90, in=90, looseness=0.7] (D_top2); 
    \draw[ar, color=violet] (A_bot2') to [out=-90, in=90, looseness=0.7] (D_top3);
    \draw[ar, color=violet] (A_bot3') to [out=-90, in=90, looseness=0.7] (D_top4);
    
    \draw[ar, color=violet] (B_bot1') to [out=-90, in=90] (tot_bot1);
    \draw[ar, color=violet] (D_bot1') to [out=-90, in=90] (tot_bot2);
    
    \draw[color=blue, dashed] (A_top1') to [out=-90, in=90] (A_bot3);
    \draw[color=blue, dashed] (B_top3') to [out=-90, in=90] (B_bot1);
    \draw[color=blue, dashed] (D_top3') to [out=-90, in=90] (D_bot1);
    
    \draw[ar, color=violet] let \p1=(A.north west), \p2=(A.south west), \n1=\bbportlen in
        (tot_top2') to[out=-90, in=90] (\x1 - \n1, \y1)  to[out=-90, in=90] (\x2 - \n1, \y2) 
    to[out=-90, in=90](D_top1);
        
    \draw[ar, color=violet] 
        (tot_top1') to[out=-90, in=90]   (B_top1);
\end{tikzpicture}
        \vspace{-0.4in}
        \caption{}
    \end{subfigure}
   \hfill
    \begin{subfigure}[b]{0.3\textwidth}
       \centering
        \begin{tikzpicture}[oriented WD, bbx = .5cm, bby =0.25cm, bb port length=4pt, bb port sep=.75, bb min width=1cm, bb min height=1cm]
    \node[bb={0}{0}{1}{3}] (A){};
    \node[bb={0}{0}{1}{3}, right=0.25cm of A] (C){};
    \node[bb={0}{0}{4}{1}, below=of A,  yshift=-0.5cm] (B){};
    
    \node[bb={0}{0}{3}{1}, fit={($(A.north west)+(0,2)$)($(C.north east)+(0, 1)$)($(B.south west)+(0, 2)$)($(B.south east)+(0, -1)$)}] (tot){};
    
    \draw[ar, color=violet] (tot_top2) to [out=-90, in=90] (A_top1);
    \draw[ar, color=violet] (tot_top3) to [out=-90, in=90] (C_top1);
    
    \draw[ar, color=violet] (A_bot1') to [out=-90, in=90] (B_top2); 
    \draw[ar, color=violet] (A_bot2') to [out=-90, in=90] (B_top3);
    \draw[ar, color=violet] (A_bot3') to [out=-90, in=90] (B_top4);
    
    \draw[ar, color=violet] (C_bot1') to [out=-90, in=90] (B_top2); 
    \draw[ar, color=violet] (C_bot2') to [out=-90, in=90] (B_top3);
    \draw[ar, color=violet] (C_bot3') to [out=-90, in=90] (B_top4);
    
    \draw[ar, color=violet] (B_bot1') to [out=-90, in=90] (tot_bot1);
    
    \draw[color=blue, dashed] (A_top1') to [out=-90, in=90] (A_bot3);
    \draw[color=blue, dashed] (C_top1') to [out=-90, in=90] (C_bot3);
    \draw[color=blue, dashed] (B_top3') to [out=-90, in=90] (B_bot1);
    
    \draw[ar, color=violet] let \p1=(A.north west), \p2=(A.south west), \n1=\bbportlen in
    (tot_top1') to[out=-90, in=90] (\x1 - \n1, \y1)  to[out=-90, in=90] (\x2 - \n1, \y2) 
    to[out=-90, in=90] (B_top1);

\end{tikzpicture}
        \vspace{-0.4in}
       \caption{}
    \end{subfigure}
    \caption{Dependent directed wiring diagrams for composing the stock-flow diagrams in \Cref{fig:water-pollutant-sf}.}
    \label{fig:water-pollutant-ddwd}
\end{figure}
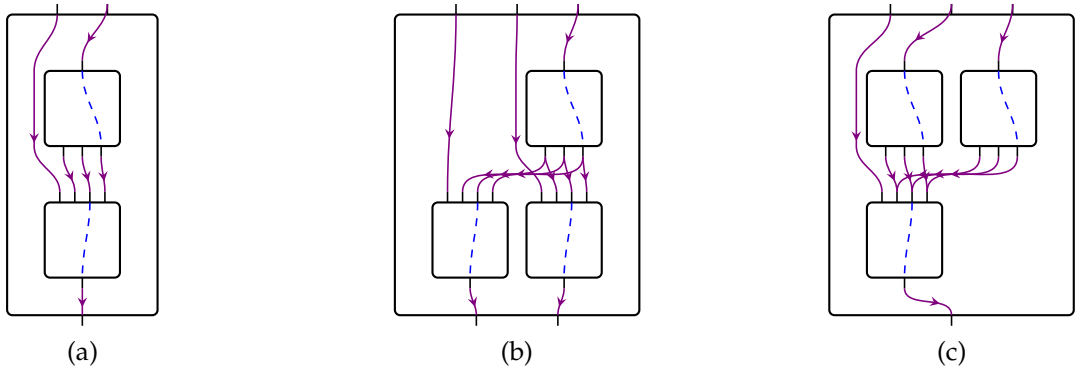

\end{example}



\subsection{Stock-flow diagrams to Mealy machines}\label{section:SFnattrans}

In this subsection, we interpret a diagrammatic stock-flow model  as a differential equation by defining a natural transformation $\alpha \colon  \SF\ \Rightarrow \Mealy$ and interpreting a Mealy machine's update $u \colon \R^{\Xin} \times \R^S \to \R^S$ as giving a tangent vector in $T \R^S$ rather than as an updated state.

\begin{definition}
Consider an open  stock-flow diagram  $(F, \varphi) \in \SF\left(\binomx, d_X \right)$.  By definition $\varphi \colon \Rst \times \Rsv \times \Rv \times \R^{\Xin} \to \Rv$ respects the span in \Cref{eq:sf-respect-span} and the graph with edges $F(\varlink)$ and vertices $F(\variable)$ is acyclic. Thus, for $s \in \R^{F(\stock)}$ and $a \in \R^{\Xin}$, the endomorphism
\[
    \varphi(s, F(\sumlink)^*(s), -, a) \colon \Rv \to \Rv
\] has a unique fixed point in $\Rv$ which we denote $\ovarphi(a,s)$. This defines a function $\ovarphi \colon \R^{\Xin} \times \R^{F(\stock)} \to \Rv$. 
\end{definition}

For $a \in \R^\Xin$ and $s \in \R^{F(\stock)}$, the fixed point $\ovarphi(a,s)$ assigns a value to each variable according to the auxiliary function given the input $a$ and the supply of each stock $s$.

\begin{definition}\label{def:alpha}

Let $\alpha \colon \SF\Rightarrow\Dynam$ be defined on components as follows.

For $\left(\binomx, d_X \right)$, define the set function $\alpha_X \colon \SF\left(\binomx, d_X \right) \to  \Dynam\left(\binomx, d_X \right)$ to take a stock-flow diagram $(F, \varphi) \in \SF\left(\binomx, d_X \right)$ to the Mealy machine whose:
\begin{itemize}
    \item States are the stocks, $F(\stock)$.
    \item Read out function $r(F, \varphi) \colon \R^{\Xin} \times \R^{F(\stock)} \to \R^\Xout$ is the composite $F(\outlink)^* \circ \ovarphi$.
    \item Update function $u(F, \varphi) \colon \R^{\Xin} \times \R^{F(\stock)} \to \R^{F(\stock)}$ is the defined by
    \[
        u(a,s) = (F(\text{is})_*\circ F(\text{if})^* \circ F(\text{fv})^*) ( \ovarphi(a,s)) - (F(\text{os})_*\circ F(\text{of})^* \circ F(\text{fv})^*) ( \ovarphi(a,s)).
    \]
\end{itemize}
\end{definition}

Recall that $\overline{\varphi}$ computes a value for each variable according the auxiliary function. For an output port, the readout function $r(F, \varphi)$ sums the value of the variables to which that output port is linked. Recall that $F(fv)$ assigns to each flow a variable that defines its rate. So for a stock, the update function $u(F, \varphi)$ implies that the rate of change for that stock  is the sum of the rates of the inflows to that stock  minus the sum of the rates of the outflows from that stock. The following theorem implies that 
 the interpretation of stock-flow diagrams as Mealy machines respects composition.

\begin{restatable}[]{theorem}{alphanattrans}

    $\alpha \colon \SF\Rightarrow\Dynam$ is a monoidal natural transformation.
    
\end{restatable}

\begin{example}

    Let $(F,\varphi)$ be the stock-flow diagram for the SIR epidemic model defined in Example~\ref{ex:initial-sir-example}. 
    To compute $\ovarphi$ it suffices to compute the value of each variable in order of their topological sort. For example note that $p$ precedes $f$ which precedes $i$ in any topological sort of the variables. We can compute their values in order:
    \[
        p = \frac{I}{S + I + R}, \quad 
        f = c \beta \frac{I}{S + I + R}, \quad
        i = c \beta S \frac{I}{S + I + R}.
    \]
    
    $\alpha$ as defined in Definition~\ref{def:alpha} applied to this stock-flow diagram gives the Mealy machine consisting of:
    \begin{itemize}
        \item Three states corresponding to the susceptible, infected, and recovered population.
        \item Read out function $\R^4 \times \R^3 \to \R$ given by
        \[
            ((c, \beta, \tau , \omega), (S, I, R)) \mapsto \frac{c\beta SI}{S + I + R}.
        \]
        \item Update function given by 
        \[
            \dot S = \omega R - \frac{c\beta SI}{S + I + R}, \quad 
            \dot I  = \frac{c\beta SI}{S + I + R} - \tau I, \quad 
            \dot R  = \tau I - \omega R.
        \]
    \end{itemize}

\end{example}

\section{Conclusion}

In this paper, we developed a compositional pattern for composing input/output systems where the output can directly and instantaneously be affected by the input. We introduced the operad of dependent directed wiring diagrams and defined two algebras, $\Dynam$ and $\SF$, for composing Mealy machines and stock-flow diagrams, respectively. Finally, we defined a natural transformation $\SF \Rightarrow \Dynam$ which interprets a diagrammatic stock-flow diagram as a differential equation.

For future work, we are inspired to extend the work of~\cite{libkind2020rsm} to define an operad algebra that combines undirected approach to composing stock-flow diagrams presented in~\cite{sfcomp} with the directed approach presented in Section~\ref{section:SFsection}. We are further interested in implementations of the directed approach to composition in the Julia package StockFlow which was developed in~\cite{sfcomp} and can be found on GitHub at \url{https://github.com/AlgebraicJulia/StockFlow.jl}. Finally, we hope to develop these $\dDWD$ algebras into double categorical systems theories~\cite{jazmyers2021double}.

\nocite{*}
\bibliographystyle{eptcs}
\bibliography{generic}

\begin{thebibliography}{10}
\providecommand{\bibitemdeclare}[2]{}
\providecommand{\surnamestart}{}
\providecommand{\surnameend}{}
\providecommand{\urlprefix}{Available at }
\providecommand{\url}[1]{\texttt{#1}}
\providecommand{\href}[2]{\texttt{#2}}
\providecommand{\urlalt}[2]{\href{#1}{#2}}
\providecommand{\doi}[1]{doi:\urlalt{https://doi.org/#1}{#1}}
\providecommand{\eprint}[1]{arXiv:\urlalt{https://arxiv.org/abs/#1}{#1}}
\providecommand{\bibinfo}[2]{#2}

\bibitemdeclare{article}{sfcomp}
\bibitem{sfcomp}
\bibinfo{author}{John \surnamestart Baez\surnameend}, \bibinfo{author}{Xiaoyan
  \surnamestart Li\surnameend}, \bibinfo{author}{Sophie \surnamestart
  Libkind\surnameend}, \bibinfo{author}{Nathaniel~D. \surnamestart
  Osgood\surnameend} \& \bibinfo{author}{Evan \surnamestart
  Patterson\surnameend} (\bibinfo{year}{2023}):
  \emph{\bibinfo{title}{Compositional Modeling with Stock and Flow Diagrams}}.
\newblock {\slshape \bibinfo{journal}{Electronic Proceedings in Theoretical
  Computer Science}} \bibinfo{volume}{380}, p. \bibinfo{pages}{77–96}.
\newblock \urlprefix\url{http://dx.doi.org/10.4204/EPTCS.380.5}.

\bibitemdeclare{article}{fong2015decorated}
\bibitem{fong2015decorated}
\bibinfo{author}{Brendan \surnamestart Fong\surnameend} (\bibinfo{year}{2015}):
  \emph{\bibinfo{title}{Decorated {{Cospans}}}}.
\newblock {\slshape \bibinfo{journal}{Theory and Applications of Categories}}
  \bibinfo{volume}{30}(\bibinfo{number}{33}), pp. \bibinfo{pages}{1096--1120}.
\newblock \eprint{1502.00872}.

\bibitemdeclare{book}{forrester1973industrialdynamics}
\bibitem{forrester1973industrialdynamics}
\bibinfo{author}{Jay~Wright \surnamestart Forrester\surnameend}
  (\bibinfo{year}{1973}): \emph{\bibinfo{title}{Industrial Dynamics}},
  \bibinfo{edition}{students' ed., 8. print} edition.
\newblock \bibinfo{publisher}{M.I.T. Pr}, \bibinfo{address}{Cambridge, Mass}.

\bibitemdeclare{article}{jazmyers2021double}
\bibitem{jazmyers2021double}
\bibinfo{author}{David \surnamestart Jaz~Myers\surnameend}
  (\bibinfo{year}{2021}): \emph{\bibinfo{title}{Double {{Categories}} of {{Open
  Dynamical Systems}} ({{Extended Abstract}})}}.
\newblock {\slshape \bibinfo{journal}{Electronic Proceedings in Theoretical
  Computer Science}} \bibinfo{volume}{333}, pp. \bibinfo{pages}{154--167},
  \doi{10.4204/EPTCS.333.11}.

\bibitemdeclare{article}{lerman2020hybrid}
\bibitem{lerman2020hybrid}
\bibinfo{author}{Eugene \surnamestart Lerman\surnameend} \&
  \bibinfo{author}{James \surnamestart Schmidt\surnameend}
  (\bibinfo{year}{2020}): \emph{\bibinfo{title}{Networks of hybrid open
  systems}}.
\newblock {\slshape \bibinfo{journal}{Journal of Geometry and Physics}}
  \bibinfo{volume}{149}, p. \bibinfo{pages}{103582},
  \doi{10.1016/j.geomphys.2019.103582}.
\newblock
  \urlprefix\url{https://www.sciencedirect.com/science/article/pii/S039304401930261X}.

\bibitemdeclare{article}{libkind2020rsm}
\bibitem{libkind2020rsm}
\bibinfo{author}{Sophie \surnamestart Libkind\surnameend}
  (\bibinfo{year}{2020}): \emph{\bibinfo{title}{An {{Algebra}} of {{Resource
  Sharing Machines}}}}.
\newblock {\slshape \bibinfo{journal}{arXiv:2007.14442 [math]}}.
\newblock \eprint{2007.14442}.

\bibitemdeclare{article}{DWD}
\bibitem{DWD}
\bibinfo{author}{Sophie \surnamestart Libkind\surnameend},
  \bibinfo{author}{Andrew \surnamestart Baas\surnameend}, \bibinfo{author}{Evan
  \surnamestart Patterson\surnameend} \& \bibinfo{author}{James \surnamestart
  Fairbanks\surnameend} (\bibinfo{year}{2022}): \emph{\bibinfo{title}{Operadic
  Modeling of Dynamical Systems: Mathematics and Computation}}.
\newblock {\slshape \bibinfo{journal}{Electronic Proceedings in Theoretical
  Computer Science}} \bibinfo{volume}{372}, pp. \bibinfo{pages}{192--206},
  \doi{10.4204/EPTCS.372.14}.

\bibitemdeclare{book}{pearl2000}
\bibitem{pearl2000}
\bibinfo{author}{Judea \surnamestart Pearl\surnameend} (\bibinfo{year}{2000}):
  \emph{\bibinfo{title}{Causality: Models, Reasoning, and Inference}}.
\newblock \bibinfo{publisher}{Cambridge University Press},
  \bibinfo{address}{Cambridge, U.K. ; New York},
  \doi{10.1017/S0266466603004109}.

\bibitemdeclare{article}{Moore}
\bibitem{Moore}
\bibinfo{author}{David \surnamestart Spivak\surnameend},
  \bibinfo{author}{Christina \surnamestart Vasilakopoulou\surnameend} \&
  \bibinfo{author}{Patrick \surnamestart Schultz\surnameend}
  (\bibinfo{year}{2020}): \emph{\bibinfo{title}{Dynamical Systems and
  Sheaves}}.
\newblock {\slshape \bibinfo{journal}{Applied Categorical Structures}},
  \doi{10.1007/s10485-019-09565-x}.

\bibitemdeclare{article}{generalizelens}
\bibitem{generalizelens}
\bibinfo{author}{David~I. \surnamestart Spivak\surnameend}
  (\bibinfo{year}{2019}): \emph{\bibinfo{title}{Generalized Lens Categories via
  functors $\mathcal{C}^{\rm op}\to\mathsf{Cat}$}}.
\newblock {\slshape \bibinfo{journal}{arXiv: Category Theory}}.
\newblock \urlprefix\url{https://api.semanticscholar.org/CorpusID:199452784}.

\bibitemdeclare{book}{sterman2009businessdynamics}
\bibitem{sterman2009businessdynamics}
\bibinfo{author}{John~D. \surnamestart Sterman\surnameend}
  (\bibinfo{year}{2009}): \emph{\bibinfo{title}{Business Dynamics: Systems
  Thinking and Modeling for a Complex World}}, \bibinfo{edition}{nachdr.}
  edition.
\newblock \bibinfo{publisher}{Irwin/McGraw-Hill}, \bibinfo{address}{Boston}.

\bibitemdeclare{article}{grothendieck}
\bibitem{grothendieck}
\bibinfo{author}{Christina \surnamestart Vasilakopoulou\surnameend} \&
  \bibinfo{author}{Joe \surnamestart Moeller\surnameend}
  (\bibinfo{year}{2020}): \emph{\bibinfo{title}{Monoidal Grothendieck
  Construction}}.
\newblock {\slshape \bibinfo{journal}{Theory and Applications of Categories}}
  \bibinfo{volume}{35}, pp. \bibinfo{pages}{1159--1207}.
\newblock \urlprefix\url{http://www.tac.mta.ca/tac/volumes/35/31/35-31.pdf}.

\end{thebibliography}


\end{document}